\def\R {\mathbb{R}}
\def\N{\mathbb{N}}
\def\eps{\varepsilon}
\def\dH{{\rm dist}_{\mathcal{H}}}
\def\vem{\vspace{0.6em}}
\newtheorem{thm}{Theorem}[section]
\newtheorem{prop}[thm]{Proposition} 
\newtheorem{conj}[thm]{Conjecture}
\newtheorem{cor}[thm]{Corollary}
\newtheorem{lem}[thm]{Lemma}
\theoremstyle{definition}
\numberwithin{equation}{section}
\theoremstyle{remark}
\newtheorem{rem}[thm]{Remark}
\title[Global solutions with superquadratic growth]{Global solutions to the thin obstacle problem with superquadratic growth} 
\author{Xavier Fern\'andez-Real}
\address{Institute of Mathematics, \'Ecole Polytechnique F\'ed\'erale de Lausanne, Lausanne, Switzerland}
\email{xavier.fernandez-real@epfl.ch}
\author{Hui Yu}
\address{Department of Mathematics,	National University of Singapore, Singapore}
\email{huiyu@nus.edu.sg}
\keywords{Thin obstacle problem, Signorini problem, global solution, superquadratic growth}
\subjclass[2010]{35R35, 35A02}
\thanks{X. F. was supported by the Swiss National Science Foundation (SNF grant  PZ00P2\_208930),   by the Swiss State Secretariat for Education, Research and Innovation (SERI) under contract number MB22.00034, and by the AEI project PID2021-125021NA-I00 (Spain).}
\begin{document}

\begin{abstract}
We study rigidity/flexibility properties of global solutions to the thin obstacle problem. For solutions with bounded positive sets, we give a classification in terms of their expansions at infinity. For solutions with bounded contact sets, we show that the contact sets are highly flexible  and can approximate arbitrary compact sets.

These phenomena have no counterparts in the classical obstacle problem.
\end{abstract}

\maketitle
\section{Introduction}

The classification of \textit{global solutions}, namely, solutions in the entire space, is a central theme in geometry and analysis. In the setting of  the \textit{classical obstacle problem}, this question has been studied for over 90 years.  

It first arose in the context of null quadrature domains \cite{KM, Sh}. In three dimensions,  it was shown that compact contact sets of global solutions must be ellipsoids \cite{Div31, Lew79}. This result, still under the compactness assumption, was later extended to all dimensions \cite{DF86, FS86}. A short proof can be found in \cite{EW}. 

Without the compactness assumption, global solutions were first classified in the plane \cite{Sak82}, where it was shown that the contact set must either be an ellipsoid or a limit of ellipsoids.
Recently, a similar classification was achieved, first in dimensions $n \ge 6$ \cite{ESW23}, and then in all dimensions \cite{EFW22}.

\vem

 In this work,  we study global solutions to the \textit{thin obstacle problem}. When posed in $\R^{n+1}$, it takes the form: 
\begin{equation}
\label{eq:main_TOP}
\left\{
\begin{array}{rcll}
u & \ge& 0 & \quad\text{on}\quad \{x_{n+1} = 0\},\\
\Delta u & = & 0 & \quad\text{in}\quad \R^{n+1}\setminus \{x_{n+1} = 0, u = 0\},\\
\Delta  u &  \le & 0&  \quad\text{in}\quad \R^{n+1},\\
u(x', x_{n+1}) & =&  u(x', -x_{n+1})& \quad \text{in}\quad \R^{n+1},
\end{array}
\right.
\end{equation}
where we have denoted 
$$x = (x', x_{n+1}) \in  \R^n\times \R.$$

 In this setting, we say that \[
\Lambda(u) = (\{u = 0\}\cap\{x_{n+1} = 0\})\times  \{0\}
\]
 is the \emph{contact set}; 
 its boundary in the relative topology of $\R^n$  is the \emph{free boundary}, namely, 
\[
\Gamma(u) = \partial_{\R^n}(\{u >  0\}\cap\{x_{n+1} = 0\})\times  \{0\};
\]
 and the \emph{positivity set} is 
 \[
 (\{u > 0\}\cap\{x_{n+1} = 0\})\times  \{0\}.
 \]
 
The thin obstacle problem is a classical free boundary problem, originally studied by Signorini in the context of linear elasticity \cite{Sig33, Sig59, KO88}. The same equations arise in various settings, including biology, fluid mechanics, and finance \cite{DL76, Mer76, CT04, Ros18, Fer22}. In the last two decades, the problem has also been extensively investigated within the mathematical community; see, for example, \cite{Caf79, AC04, ACS08, GP09, FS18, CSV20, FJ21, FS23, SY22, SY21, FT23,  FS24} and references therein. For a general introduction to the topic, we refer to \cite{PSU12, Fer22}.

\vem

Unlike the classical case, the classification of global solutions to the thin obstacle problem remains incomplete. A major difference lies in the rate of growth of a solution at infinity. For the classical obstacle problem, solutions grow quadratically. For the thin obstacle problem, however, solutions can have different rates of growth.

Despite this, in \cite{ERW} the authors established a bijective correspondence between global solutions with compact contact sets and  their polynomial approximation at infinity. For solutions with quadratic growth, they showed that compact contact sets are convex. Still under the assumption of quadratic growth, Eberle and the second author proved that compact contact sets are ellipsoids \cite{EY}. 

The approach in \cite{EY} relies heavily on the approximation of the thin obstacle problem by the classical obstacle problem. This is only possible for solutions with quadratic growth. Addressing solutions solutions with general growth requires new techniques specific to the thin obstacle problem. 
 
 \vem 
 
In this work, we  study solutions with general a growth rate and observe behaviors that have no analogue in the classical obstacle problem. 

More precisely, we identify two distinct types of behavior that illustrate the richness of global solutions in the thin setting. First, we characterize the set of global solutions with bounded \emph{positivity set} (a phenomenon that cannot happen in the classical obstacle problem). Second, we show that even for compact sets, dropping the quadratic growth assumption allows highly flexible contact sets. In particular, this resolves a conjecture from \cite{ERW} on the nonconvexity of compact contact sets for superquadratic solutions.

\subsection{Global solutions with bounded positivity set}
We say that a  function $u: \R^{n+1}\to \R$ has \textit{polynomial growth} if there exists some $m\in \N$ such that 
\begin{equation}
\label{eq:poly_m}
\left\| \, \frac{u(x)}{1+|x|^m}\right\|_{L^\infty(\R^{n+1})} < +\infty.
\end{equation}
The smallest $m$ for which \eqref{eq:poly_m} holds is the \textit{order} of the polynomial growth of $u$. 

The main result in \cite{ERW} establishes the correspondence between global solutions to \eqref{eq:main_TOP} with polynomial growth and compact contact sets, and even (in $x_{n+1}$)  harmonic polynomials $p$ with bounded negative set on $\{x_{n+1} = 0\}$; see Section~\ref{sec:nonconvex} below. 

In this work, by contrast, we provide a correspondence between global solutions to \eqref{eq:main_TOP} with polynomial growth and bounded positivity set, and odd (in $x_{n+1}$)  polynomials $p$ that vanish on the thin space, and whose normal derivative $\partial_{n+1}p $ at $\{x_{n+1} = 0\}$ has a bounded negative set on the thin space. In particular, we show the existence of a broad family of global solutions with bounded positivity set.  Unlike the compact contact set case, this phenomenon is exclusive to the thin setting.

Let $\mathcal{P}_{n+1}$ denote the set of polynomials in $\R^{n+1}$. If we define 
\[
\mathcal{P}^o_{n+1} := \left\{
\begin{array}{l}
q \in \mathcal{P}_{n+1} : \Delta (x_{n+1} q) = 0, ~~ q\text{ is even in $x_{n+1}$}\\[2mm]
\hspace{2.6cm} \overline{\{  q(x', 0) < 0\}}~~\text{is compact}
\end{array}
\right\},
\]
then we have the following result:
\begin{thm}
\label{thm:main1}
For $n \ge 2$, let $u$ be a global solution to \eqref{eq:main_TOP} with polynomial growth.  Then, the positivity set $\{u > 0, x_{n+1} = 0\}$ is bounded if and only if 
\begin{equation}
\label{eq:cond_inf_odd}
|u(x) + |x_{n+1}| q(x)\, |\to 0\quad\text{as}\quad |x|\to \infty, 
\end{equation} 
for some $q\in \mathcal{P}^o_{n+1}$. 

Moreover, given any $q\in \mathcal{P}^o_{n+1}$, there is a unique $u$ solution to \eqref{eq:main_TOP} such that \eqref{eq:cond_inf_odd} holds. 
\end{thm}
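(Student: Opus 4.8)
The plan is to establish the two implications of the biconditional and the existence/uniqueness statement separately, organized around the comparison function $P:=-|x_{n+1}|q$, which is even in $x_{n+1}$, harmonic in $\R^{n+1}\setminus\{x_{n+1}=0\}$, vanishes on the thin space, and satisfies $\Delta P=-2\,q(\cdot,0)\,\mathcal H^n$ on $\{x_{n+1}=0\}$. I will also use repeatedly that, for functions even in $x_{n+1}$ and harmonic off the thin space, the condition $\Delta u\le 0$ is equivalent to $\partial_{n+1}u(\cdot,0^+)\le 0$ on $\{x_{n+1}=0\}$, with equality exactly on the positivity set, and that $\partial_{n+1}u(\cdot,0^+)$ is continuous on the thin space by the $C^{1,1/2}$ regularity of solutions to \eqref{eq:main_TOP}. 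For the direction ``bounded positivity set $\Rightarrow$ \eqref{eq:cond_inf_odd}'': if the positivity set lies in $B_\rho$, then $u\equiv 0$ on $\{x_{n+1}=0\}\setminus\overline{B_\rho}$, so by Schwarz reflection the odd extension $\tilde u(x',x_{n+1}):=\operatorname{sgn}(x_{n+1})u(x',|x_{n+1}|)$ is harmonic in $\R^{n+1}\setminus C$ with $C:=\{x_{n+1}=0\}\cap\overline{B_\rho}$ compact, and keeps the polynomial growth of $u$. A standard multipole decomposition writes $\tilde u=P_0+h$ with $P_0$ a harmonic polynomial and $h$ harmonic in $\R^{n+1}\setminus C$ with $h\to 0$ at infinity; both are odd in $x_{n+1}$, so $h$ vanishes on the thin space and $P_0=-x_{n+1}q$ with $q$ even in $x_{n+1}$ and $\Delta(x_{n+1}q)=0$. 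Then in $\{x_{n+1}>0\}$ we get $u+|x_{n+1}|q=\tilde u-P_0=h\to 0$, hence \eqref{eq:cond_inf_odd}; and since $h$ is near infinity a superposition of negative-degree homogeneous harmonics, $\partial_{n+1}h(\cdot,0)=O(|x'|^{-n})\to 0$, so from $\partial_{n+1}u(\cdot,0^+)=-q(\cdot,0)+\partial_{n+1}h(\cdot,0)\le 0$ one obtains $\liminf_{|x'|\to\infty}q(x',0)\ge 0$, which forces $\overline{\{q(x',0)<0\}}$ compact, i.e. $q\in\mathcal{P}^o_{n+1}$.

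For the converse, assume \eqref{eq:cond_inf_odd} with $q\in\mathcal{P}^o_{n+1}$ and fix $\rho$ with $\{q(x',0)<0\}\subset B_\rho$. The linchpin is $u\ge P$ in $\R^{n+1}$: the function $v:=u-P$ is harmonic in $\{x_{n+1}>0\}$, equals $u\ge 0$ on $\{x_{n+1}=0\}$, and tends to $0$ at infinity, so $v\ge 0$ by the half-space maximum principle, and one extends by even symmetry. Hence at thin points where $u$ vanishes $\partial_{n+1}u(\cdot,0^+)\ge\partial_{n+1}P(\cdot,0^+)=-q(\cdot,0)$, while $\partial_{n+1}u(\cdot,0^+)=0$ on the positivity set; since $q(\cdot,0)\ge 0$ outside $B_\rho$, in both cases $\partial_{n+1}(u-P)(\cdot,0^+)\ge 0$ there, so $w:=u-P$ is subharmonic in $\R^{n+1}\setminus\overline{B_\rho}$. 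If $\Gamma(u)$ contained a point $x_0'$ with $|x_0'|>\rho$, then near $(x_0',0)$ the nonnegative function $w$ is harmonic in a half-ball, nonconstant unless $u\equiv P$, and vanishes at $(x_0',0)$; by the Hopf lemma $\partial_{n+1}w(x_0',0^+)<0$, contradicting $\partial_{n+1}w(x_0',0^+)=\partial_{n+1}u(x_0',0^+)+q(x_0',0)=q(x_0',0)\ge 0$ (using continuity of $\partial_{n+1}u(\cdot,0^+)$ and that it vanishes at free boundary points). So $\Gamma(u)$ is compact. Since $\R^n\setminus B_R$ is connected for $n\ge 2$, outside a large ball the thin space is entirely positivity set or entirely contact set; in the latter case the positivity set is already bounded; in the former the contact set is compact, so $u$ is harmonic in $\R^{n+1}\setminus C$ with $C$ compact, and the multipole decomposition together with \eqref{eq:cond_inf_odd} forces the polynomial part to vanish (one would be matching an even harmonic polynomial with the odd function $-x_{n+1}q$), whence $q\equiv 0$ and then $u\equiv 0$. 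Finally, if $u\equiv P$ then $P$ solves \eqref{eq:main_TOP}, so $q(\cdot,0)\ge 0$ everywhere and the positivity set is empty. In every case the positivity set is bounded.

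For the ``moreover'' part, uniqueness is a short maximum principle: if $u_1,u_2$ both solve \eqref{eq:main_TOP} with \eqref{eq:cond_inf_odd} for the same $q$, then $w:=u_1-u_2\to 0$ at infinity, and on $\{w>0\}$ the thin space is met only inside the positivity set of $u_1$ (where $\partial_{n+1}u_1(\cdot,0^+)=0$), so $\Delta w=\Delta u_1-\Delta u_2\ge 0$ there; thus $w$ is subharmonic on $\{w>0\}$, vanishes on $\partial\{w>0\}$ and at infinity, hence $w\le 0$, and by symmetry $u_1=u_2$. For existence, given $q\in\mathcal{P}^o_{n+1}$ let $\Psi$ be the Newtonian potential of the compactly supported nonnegative measure $2\max(-q(\cdot,0),0)\,\mathcal H^n$ on $\{x_{n+1}=0\}$; since $n+1\ge 3$, $\Psi\ge 0$ is even in $x_{n+1}$, harmonic off the thin space, decays at infinity, and $\Delta(P+\Psi)=-2\max(q(\cdot,0),0)\,\mathcal H^n\le 0$, so $P+\Psi$ is a supersolution which is $\ge 0$ on the thin space. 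Solving \eqref{eq:main_TOP} in $B_R$ with boundary datum $P+\Psi$ gives $u_R$ with $P\le u_R\le P+\Psi$ (lower bound by the half-space comparison above, upper bound by the comparison principle for the Signorini problem), and a subsequential limit as $R\to\infty$, using interior and thin obstacle estimates, produces a global solution $u$ with $0\le u-P\le\Psi$, hence with polynomial growth and satisfying \eqref{eq:cond_inf_odd}; by the converse implication $u$ has bounded positivity set, and by uniqueness it is the only solution with the given asymptotics.

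I expect the main obstacle to be the second paragraph: first establishing the sign $\partial_{n+1}(u-P)(\cdot,0^+)\ge 0$ outside $B_\rho$ (which rests on the half-space comparison $u\ge P$), and then combining the Hopf lemma at free boundary points with the connectedness of $\R^n\setminus B_R$ to prevent $\Gamma(u)$ from escaping to infinity, together with ruling out the residual compact-contact-set alternative by polynomial matching. By contrast, the first implication is essentially bookkeeping once the reflection and the multipole expansion are in place, and the existence/uniqueness part is a routine barrier construction plus a one-line maximum principle.
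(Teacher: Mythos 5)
Your uniqueness and existence steps are sound, and in fact slightly different from the paper's in a pleasant way: uniqueness is the same maximum-principle comparison as Corollary~\ref{cor:comp}, while for existence your barrier $P+\Psi$, with $\Psi$ the (decaying, even) single-layer Newtonian potential of $2\max(-q(\cdot,0),0)\,\mathcal{H}^n$, replaces the paper's barrier $-|x_{n+1}|q+\kappa\Phi$ built from a Perron solution with a bump obstacle plus a Hopf constant, and gives the same two-sided bound $P\le u_R\le P+\Psi$. Your proof that a bounded positivity set implies \eqref{eq:cond_inf_odd} is essentially the paper's Step 4 (odd reflection, exterior Liouville expansion as in Lemma~\ref{lem:Liouv}, sign of $\partial^+_{n+1}u$); the closing inference that the resulting lower bound on $q$ at infinity forces $\overline{\{q(\cdot,0)<0\}}$ to be compact is stated with the same brevity as in the paper, and for a general polynomial (whose negative set can be unbounded but shallow) it deserves a more careful justification in both write-ups.

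The genuine gap is in your second paragraph, i.e.\ the implication ``\eqref{eq:cond_inf_odd} with $q\in\mathcal{P}^o_{n+1}$ $\Rightarrow$ bounded positivity set''. The Hopf step is wrong: for $w=u-P\ge 0$ harmonic in the upper half-ball with $w(x_0',0)=0$ and $w\not\equiv 0$, Hopf's lemma gives that the derivative \emph{into} the domain is strictly positive, $\partial_{n+1}w(x_0',0^+)>0$, not $<0$; and since at a free boundary point $\partial_{n+1}w(x_0',0^+)=q(x_0',0)$ and you only know $q\ge 0$ outside $B_\rho$, there is no contradiction --- you only learn that far-out free boundary points sit where $q>0$, which is vacuous in the typical case. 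Hence boundedness of $\Gamma(u)$, and so of the positivity set, is never established; and because your existence paragraph then invokes ``the converse implication'' to conclude that the constructed solution has bounded positivity set, the whole ``if'' direction remains unproved. The information you do have ($w\ge 0$, $w\to 0$ at infinity, $w$ subharmonic outside $B_\rho$) cannot suffice by itself: what actually forces contact far out is the quantitative positivity of $q$ away from the origin, which your argument never uses. This is exactly the paper's Step 3: at a far-out thin point $x_\circ$ one compares $u$, in the unit cylinder around $x_\circ$, with the supersolution $-|x_{n+1}|q+|x_{n+1}|-\tfrac12 x_{n+1}^2+\tfrac{1}{2n}|x'-x_\circ'|^2$ (superharmonic where $q\ge 1$), using that $u\le -|x_{n+1}|q+\kappa\Phi$ with $\kappa\Phi\le\tfrac{1}{2n}$ there, to force $u(x_\circ)=0$. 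In your setting you could run the same cylinder barrier directly, since \eqref{eq:cond_inf_odd} gives $u\le P+\eps$ outside a large ball; alternatively, prove boundedness only for your constructed solution and transfer it to any solution satisfying \eqref{eq:cond_inf_odd} by your uniqueness step, which is how the paper organizes the argument. The remaining case analysis in that paragraph (connectedness of $\R^n\setminus B_R$, and ruling out a compact contact set by polynomial matching) is fine in spirit but moot until this is repaired.
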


We remark again that such type of behaviour has no analogue for the classical setting. 

\subsection{Flexibility of compact contact sets} 
We turn to global solutions with compact contact sets. It is conjectured in \cite[Remark 2]{ERW} that such sets need not be ellipsoids---or, in the context of their paper, convex---at least for solutions with superquadratic growth.\footnote{For solutions with exactly quadratic growth, such a statement would be false, as shown in \cite{EY}.}

It is not difficult to construct examples supporting this conjecture: one can produce solutions whose contact set is disconnected (see Lemma~\ref{lem:twoballs}), or even not simply connected (see Lemma~\ref{lem:disk}). In fact, as soon as we move to the lowest superquadratic order, namely, $m=4$, the contact set  can depart from convexity or ellipsoidal geometry. 

For instance, we prove the following: 

\begin{lem}
\label{lem:starshaped}
There exists a global solution to \eqref{eq:main_TOP} with polynomial growth \eqref{eq:poly_m} and $m = 4$ whose contact set is compact, nonconvex, and star-shaped. 
\end{lem}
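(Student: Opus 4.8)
The plan is to exploit the bijective correspondence from Theorem~\ref{thm:main1} (or rather, for the compact contact set case, the correspondence from \cite{ERW}) and reduce the construction to choosing an explicit polynomial of degree $4$ with a suitable negative set. In the compact contact set setting, global solutions with polynomial growth correspond to even (in $x_{n+1}$) harmonic polynomials $p$ whose negative set $\{p(x',0)<0\}$ on the thin space is bounded; the contact set of the corresponding solution $u$ is, up to lower-order corrections at infinity, essentially governed by $\overline{\{p(x',0)<0\}}$. So my first step is to write down a quartic polynomial $P(x')$ on $\R^n$ (for concreteness $n=2$, $x'=(x_1,x_2)$) whose zero set $\{P=0\}$ bounds a compact region that is star-shaped with respect to the origin but not convex, for instance something of the form $P(x')=|x'|^4 - a|x'|^2 + b\,(x_1^2-x_2^2)$ or a perturbation $P(x')=(|x'|^2-1)^2 - \eps\, g(x'/|x'|)$ with $g$ chosen so that the $1$-sublevel set in the radial variable is star-shaped but develops a nonconvex ``dent''. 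The key elementary check here is that for a small but fixed $\eps$ the region $\{P<0\}$ remains star-shaped (the radial function $\rho(\theta)$ describing its boundary stays single-valued and positive) while its boundary curvature changes sign somewhere, so it is genuinely nonconvex.

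The second step is to upgrade this polynomial on the thin space to an admissible even harmonic polynomial on $\R^{n+1}$. Given $P(x')$ of degree $4$, one produces the (unique) even-in-$x_{n+1}$ harmonic extension $p(x',x_{n+1}) = P(x') - \tfrac{x_{n+1}^2}{2}\Delta_{x'}P(x') + \tfrac{x_{n+1}^4}{24}\Delta_{x'}^2 P(x') - \cdots$, which terminates since $P$ is a polynomial; this $p$ is even in $x_{n+1}$, harmonic, of degree $4$, and satisfies $p(x',0)=P(x')$, hence has the desired bounded negative set on the thin space. Then, by the correspondence established in \cite{ERW} (and recalled before Theorem~\ref{thm:main1}), there is a global solution $u$ to \eqref{eq:main_TOP} with polynomial growth of order $m=4$ and compact contact set such that $u(x)-p(x)\to 0$ (in the appropriate sense) as $|x|\to\infty$.

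The third step is to verify that the contact set $\Lambda(u)$ of this solution is indeed compact, nonconvex, and star-shaped. Compactness is immediate from the correspondence. For the geometric properties one has to control $\Lambda(u)$ near the polynomial's negative set: the heuristic is that $\{u=0,x_{n+1}=0\}$ is a small perturbation of $\overline{\{P<0\}}$, so if the latter is chosen to be \emph{robustly} star-shaped and \emph{robustly} nonconvex (strictly star-shaped with a region of strictly negative boundary curvature, both stable under small perturbations), these properties pass to $\Lambda(u)$. I anticipate this is the main obstacle: turning the asymptotic statement $u-p\to 0$ at infinity into a quantitative description of $\Lambda(u)$ as a whole requires knowing that the contact set cannot develop unexpected extra components or pathological boundary behaviour away from where $p$ changes sign. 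Here I would invoke the structure results already available for the thin obstacle problem — real-analyticity / regularity of the free boundary away from singular points, non-degeneracy of $u$, and the fact that on the positivity side $u>0$ forces $p\ge 0$ up to a controlled error — to pin $\Lambda(u)$ inside a thin neighbourhood of $\overline{\{P<0\}}$ and to show it contains a thin neighbourhood's worth of that set's interior. A cleaner alternative, which I would pursue if the perturbative argument gets delicate, is to choose $P$ so explicitly (e.g. with discrete symmetry group $D_4$ and an obvious star-shapedness centre) that the symmetry of $p$ is inherited by the unique solution $u$, reducing the verification of star-shapedness to a one-dimensional monotonicity statement along rays and the nonconvexity to exhibiting two contact points whose connecting segment exits $\Lambda(u)$ — both checkable from the sign of $u$ on a finite list of rays together with the known modulus of convergence $u-p\to 0$.
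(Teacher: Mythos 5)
There is a genuine gap, and it sits exactly where you flag it: step 3. The only thing the correspondence of Theorem~\ref{thm:ERW} gives you for free is the inclusion $\Lambda(u)\subset\overline{\{p(x',0)\le 0\}}$ (since $v_p\ge 0$); there is no general principle that $\Lambda(u)$ is Hausdorff-close to $\overline{\{p(x',0)<0\}}$, and your heuristic ``the contact set is a small perturbation of the negative set'' is false without further work. The paper's own quantitative Lemma~\ref{lem:globk} illustrates this: for the obstacle $|x'|^k-1$, whose negative set on the thin space is $B_1'$, the barrier argument only pins the contact set between $B_{\bar\rho(k,n)}'$ and $B_1'$, with $\bar\rho(2,n)$ well below $1$; the contact set can thus sit deep inside the negative set, and a nonconvex ``dent'' of $\{P<0\}$ may simply not be seen by $\Lambda(u)$. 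For the same reason, your fallback tools (free boundary regularity, nondegeneracy, $D_4$ symmetry plus sign checks on rays) do not suffice: symmetry of $u$ does not imply monotonicity along rays of membership in $\Lambda(u)$, which is what star-shapedness requires. In the paper, star-shapedness is \emph{not} perturbative: it comes from the scaling comparison of Lemma~\ref{lem:nonconvex_starshaped}, which crucially needs the obstacle polynomial to have the exact form $p=p^h-c$ with $p^h$ homogeneous; neither of your candidates $|x'|^4-a|x'|^2+b(x_1^2-x_2^2)$ nor $(|x'|^2-1)^2-\eps\,(\cdots)$ has this structure (and note $g(x'/|x'|)$ is not even polynomial unless you replace it by a degree-$4$ homogeneous term), so that route is closed for them as stated.

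The paper's actual mechanism is different in both respects. Nonconvexity is not produced by denting a convex negative set but by \emph{squeezing}: in Proposition~\ref{prop:polysets} one takes $f\ge 0$ (for Lemma~\ref{lem:starshaped}, $f=x_1^2x_2^2$, whose zero set is a cross) and the obstacle $s_\beta=p_k(\cdot/\bar\rho(n,k))+\beta f_{\rm ext}$ with $\beta$ large; comparison with the explicitly controlled radial solution of Lemma~\ref{lem:globk} forces $(\{f=0\}\times\{0\})\cap B_1\subset\Lambda(u)$, while taking $\beta$ large confines $\Lambda(u)$ to an $\eps$-neighbourhood of $\{f=0\}$, and a thin neighbourhood of a cross is automatically nonconvex. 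Star-shapedness then follows because, $f$ being $4$-homogeneous, the resulting obstacle can be taken of the form $p^h-c_f$ (Remark~\ref{rem:mhom}), so Lemma~\ref{lem:nonconvex_starshaped} applies. To repair your proposal you would need to supply an analogue of this quantitative two-sided control of $\Lambda(u)$ (lower bound by an explicit barrier or a large-parameter limit, upper bound by the negative set), plus a structural reason for star-shapedness; as written, the geometric conclusions about $\Lambda(u)$ are asserted rather than proved.
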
 

We achieve this through a more general result, showing that solutions to the thin obstacle problem with compact contact sets can have them to be arbitrarily close (in Hausdorff distance) to the zero level set of a polynomial.

\begin{prop}
\label{prop:polysets}
Let $n \ge 2$. Let $f\in \mathcal{P}_n$ with $f\ge 0$ and degree $m \ge 2$. Then:
\begin{enumerate}[(i)]
\item \label{it:i}
For any $\eps > 0$ there exists a global solution $u$ to \eqref{eq:main_TOP} with polynomial growth \eqref{eq:poly_m} of order $m$, whose contact $\Lambda(u)$ set satisfies
\[
\left(\{f = 0\}\times\{0\}\right)\cap B_1\subset \Lambda(u) \subset \{ (x', 0) \in \R^{n+1} : {\rm dist}(x', \{f = 0\})\le \eps\}\cap B_{R},
\]
for some constant $R>1$ depending only on $n$. 

\item For any $\eps > 0$ there exists a global solution $u$ to \eqref{eq:main_TOP} with polynomial growth \eqref{eq:poly_m} of order $m_\eps$ depending on $\eps$, whose contact $\Lambda(u)$ set satisfies
\[
\left(\{f = 0\}\times\{0\}\right)\cap B_1\subset \Lambda(u) \subset \{ (x', 0) : {\rm dist}(x', \{f= 0\})\le \eps\} \cap B_{1+\eps}.
\]
\end{enumerate}
\end{prop}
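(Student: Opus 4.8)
The plan is to reduce the entire statement to an algebraic construction of polynomials, using the bijective correspondence of \cite{ERW} recalled in Section~\ref{sec:nonconvex}: a global solution to \eqref{eq:main_TOP} with polynomial growth of order $m$ and compact contact set is the same datum as an even (in $x_{n+1}$) harmonic polynomial $p$ of degree $m$ with $\{p(x',0)<0\}$ bounded, and one has $u-p\to 0$ at infinity as part of that correspondence. Since every $P\in\mathcal P_n$ has a unique even harmonic extension $p$ to $\R^{n+1}$, of the same degree, everything hinges on choosing $P$ well, and then extracting two inclusions. The upper one, $\Lambda(u)\subset\{p(x',0)\le 0\}\times\{0\}$, is soft: because $u-p\to 0$ and $\Delta(u-p)=\Delta u\le 0$, the function $u-p$ is superharmonic and, by the minimum principle on large balls, nonnegative; since it equals $-p$ on $\Lambda(u)$, there $p\le 0$. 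The lower inclusion, $\{f=0\}\times\{0\}\cap B_1\subset\Lambda(u)$, is the genuinely hard part, and I plan to address it by forcing $-p$ to be \emph{uniformly bounded below}, not merely negative, on $\{f=0\}\cap B_1$.

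For part~\eqref{it:i} I would take
\[
P(x'):=C f(x')-1+2^{-m}|x'|^{m},
\]
with $C=C(f,n,\eps)$ a large constant. Note first that $m$ is automatically even (a nonnegative polynomial of degree $m\ge 2$ has nonnegative leading form, which is impossible for odd $m$ unless it vanishes), so $|x'|^m=(|x'|^2)^{m/2}$ is a genuine degree-$m$ polynomial, $\deg P=m$, and the leading form $Cf_m+2^{-m}|x'|^m$ is strictly positive on the sphere; hence the even harmonic extension $p$ has degree exactly $m$ and the associated $u$ has growth order $m$. The term $2^{-m}|x'|^m$ confines $\{P\le 0\}$ to $\overline{B_2}$, so $R$ may be taken independent of $f$ and $\eps$ (say $R=3$); the term $Cf$ confines $\{P\le 0\}$ to $\{f\le 1/C\}$, which lies in the $\eps$-neighborhood of $\{f=0\}$ once $C$ is large, by compactness of $\{f\le 1\}\cap\overline{B_2}$ and $\{f=0\}\cap\overline{B_2}$; and on $\{f=0\}\cap\overline{B_1}$ one has $-P=1-2^{-m}|x'|^m\ge 3/4$. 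Feeding $p$ into the correspondence yields $u$, and the upper inclusion $\Lambda(u)\subset\{P\le 0\}\times\{0\}\subset\bigl(\{{\rm dist}(\cdot,\{f=0\})\le\eps\}\cap B_R\bigr)$ is immediate from the previous paragraph.

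The remaining step, which I expect to be the main obstacle, is $\{f=0\}\cap B_1\subset\Lambda(u)$: the contact set must not detach from the ``ridge'' $\{f=0\}$ inside the thin tube $\{P<0\}$. The picture is that $\{P<0\}$ is a tube of transverse width $\sim C^{-1/\kappa}$ around $\{f=0\}$ (with $\kappa$ the vanishing order of $f$) on which the obstacle $-p$ rises to height $\gtrsim 1$, and a superharmonic function matching $p$ at infinity cannot vault over such a tall, thin obstacle without touching it along its core. Making this quantitative requires a comparison argument: one bounds $u$ from above near $\{f=0\}\cap B_1$ by an explicit barrier---for instance built from a small multiple of the Newtonian kernel placed along $\{f=0\}$, or from the potential of $(-\Delta p)^+$ restricted to $\{p<0\}$---that equals $0$ on $\{f=0\}\cap B_1$ and lies above any solution there, forcing $u\equiv 0$ on that set. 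The delicate point is uniformity in the (possibly singular) geometry of $\{f=0\}$, and this is where the bulk of the work lies.

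For part~(ii) the only change is that one demands $\{P\le 0\}\subset B_{1+\eps}$ instead of $\overline{B_2}$. Taking
\[
P(x'):=C f(x')-1+(1+\eps)^{-2k}|x'|^{2k},
\]
the compactifying term is still $\ge 1$ on $\partial B_{1+\eps}$, but one also needs it to be $\ll 1$ on $\overline{B_1}$ so as not to spoil the lower bound $-P\gtrsim 1$ on $\{f=0\}\cap B_1$; since $(1+\eps)^{-2k}$ is small on $\overline{B_1}$ only when $k$ is large, one is forced to take $k=k(\eps)\to\infty$ as $\eps\to 0$, and this is precisely why the growth order $m_\eps=\max(m,2k(\eps))$ depends on $\eps$. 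With this $P$ the same two inclusions go through, now with the $\eps$-neighborhood intersected with $B_{1+\eps}$. Finally, Lemma~\ref{lem:starshaped} follows from part~\eqref{it:i} by choosing, in $\R^2$, a concrete nonnegative quartic $f$ whose zero set is a nonconvex star-shaped curve.
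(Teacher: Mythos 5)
Your reduction to the ERW correspondence and your treatment of the upper inclusion ($v_p=u-p\ge 0$ by superharmonicity and decay, hence $\Lambda(u)\subset\{p(\cdot,0)\le 0\}\times\{0\}$, which the confining term $2^{-m}|x'|^m$ and the large factor $C$ place inside the $\eps$-neighborhood of $\{f=0\}$ and a fixed ball) are fine, and your choice $P=Cf-1+2^{-m}|x'|^m$ is in fact of the same shape as the paper's obstacle (a radial confining polynomial plus a large nonnegative multiple of $f$). But the lower inclusion $(\{f=0\}\times\{0\})\cap B_1\subset\Lambda(u)$ --- which you yourself flag as ``the main obstacle'' and ``where the bulk of the work lies'' --- is left unproved, and it is precisely the nontrivial content of the proposition. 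Moreover, the heuristic you propose for it is doubtful as stated: $\{f=0\}$ can be very low-dimensional (for instance a finite set of points, e.g.\ $f(x')=|x'-a|^2|x'+a|^2$), and such sets have zero Newtonian capacity in $\R^{n+1}$, so no barrier built from potentials ``placed along $\{f=0\}$'' or from $(-\Delta p)^+$ can by itself force the solution to touch the obstacle \emph{exactly} at every point of $\{f=0\}\cap B_1$; what you need is exact contact on a prescribed (possibly thin) set, not merely that $u$ is small near it, and as $C\to\infty$ the obstacle concentrates on a set the equation barely sees, so ``a tall thin ridge must be touched along its core'' is not a valid principle here.

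The paper closes exactly this gap by a two-step comparison that is absent from your proposal. First (Lemma~\ref{lem:globk}), for the purely radial obstacle $|x'|^k-1$ one shows, by sliding the explicit barriers $A|x|^{1-n}+t$ from above, that the contact set of the corresponding global solution contains $B_{\bar\rho(n,k)}$ with an explicit $\bar\rho(n,k)<1$, $\bar\rho\to 1$ as $k\to\infty$; rescaling by $\bar\rho$ gives a radial solution $w_k$ whose contact set contains $\overline{B_1}$, as in \eqref{eq:togstep2}. Second, for the obstacle $-s_\beta$ with $s_\beta=p_k(\cdot/\bar\rho)+\beta f_{\rm ext}$, one uses $\beta f_{\rm ext}\ge 0$ on the thin space, so $-s_\beta\le -p_k(\cdot/\bar\rho)$ and hence $v_{s_\beta}\le w_k$ by Lemma~\ref{lem:comp}, while on $\{f=0\}$ the two obstacles \emph{coincide}; on $(\{f=0\}\times\{0\})\cap B_1$ this squeezes $-s_\beta\le v_{s_\beta}\le w_k=-p_k(\cdot/\bar\rho)=-s_\beta$, giving exact contact there. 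Note that your construction could be repaired along the same lines only after (a) calibrating the radial part so that the associated radial solution's contact set covers $\overline{B_1}$ (with $2^{-m}|x'|^m-1$ this amounts to checking $2\bar\rho(n,m)\ge 1$, which you do not address, and which the paper sidesteps by rescaling by $\bar\rho$), and (b) running the monotonicity/squeezing comparison above; without these two ingredients the proof is incomplete. The same gap affects your part (ii), where the paper instead takes $k$ large so that $1/\bar\rho(n,k)\le 1+\eps$, which is where the $\eps$-dependent growth order really comes from.
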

\begin{rem}
\label{rem:mhom}
In part \eqref{it:i}, if the polynomial $f$ is $m$-homogeneous, the global solution $u$ satisfying \eqref{eq:main_TOP_2} for some $p$ can be taken with $p = p_h - c_f$, for some $m$-homogeneous polynomial $p_h$, and a constant $c_f$. 
\end{rem}

As a consequence, we show that compact contact sets exhibit no rigidity at all  if one allows arbitrarily large growths.  They can approximate \emph{any} compact set $K$ in the following sense:

\begin{thm}
\label{thm:main2}
Let $n \ge 2$. Let $K\subset \{x_{n+1} = 0\}$ be any compact set. Then, for any $\eps > 0$ there exists a solution $u = u_\eps$ to \eqref{eq:main_TOP} with polynomial growth such that 
\[
K\subset \Lambda(u_\eps)\quad\text{and}\quad \dH(\Lambda(u_\eps), K) \le \eps,
\]
and 
\[
\dH(\Gamma(u_\eps), \partial_{\R^n} K) \le \eps,
\]
where $\dH$ denotes the Hausdorff distance between sets. 
\end{thm}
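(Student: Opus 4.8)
The plan is to deduce Theorem~\ref{thm:main2} from Proposition~\ref{prop:polysets} by a two-step approximation: first approximate the arbitrary compact set $K$ in Hausdorff distance by a zero set of a nonnegative polynomial, then apply Proposition~\ref{prop:polysets}(ii) to realize (an outer neighborhood of) that polynomial's zero set as a contact set of a global solution, keeping all errors under control. The key point making the first step possible is a standard fact: any compact set $K\subset\R^n$ can be approximated from outside, in Hausdorff distance, by sublevel sets $\{g\le \delta\}$ of smooth nonnegative functions (for instance $g(x') = {\rm dist}(x',K)^2$ mollified, or a polynomial approximation of it on a large ball), and by the Stone--Weierstrass theorem one may take $g$ to be a polynomial $f\in\mathcal{P}_n$ with $f\ge 0$ on a ball containing $K$; after adding a small constant and truncating, the set $\{f = 0\}$ can be replaced by a set of the form $\{f\le\delta\}$, which is again the zero set of the nonnegative polynomial $(f-\delta)_+^2$ suitably smoothed—alternatively, one works directly with $\tilde f := $ a nonnegative polynomial whose zero set is exactly the desired approximation of $K$. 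The upshot is: for every $\eta>0$ there is $f\in\mathcal{P}_n$, $f\ge 0$, with $\dH(\{f=0\}\times\{0\}, K)\le\eta$ and $K\subset\{f=0\}\times\{0\}$ (the latter can be arranged since one can take $f$ to vanish on $K$, e.g. by composing with ${\rm dist}(\cdot,K)$).

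Next I would rescale. Proposition~\ref{prop:polysets}(ii) produces, for a given $f$ and given $\eps'>0$, a global solution $u$ with $(\{f=0\}\times\{0\})\cap B_1\subset\Lambda(u)\subset\{{\rm dist}(x',\{f=0\})\le\eps'\}\cap B_{1+\eps'}$. To handle a general compact $K$, which need not fit inside $B_1$, I would first rescale space so that $K$ (and a small neighborhood of it) lies well inside $B_{1/2}$, say $K\subset B_{1/2}$ after replacing $K$ by $\lambda K$ for small $\lambda$; apply the proposition on this scale; then undo the rescaling, using that the class of global solutions to \eqref{eq:main_TOP} is invariant under the scaling $u\mapsto \lambda^{-2}u(\lambda\,\cdot)$ (which preserves polynomial growth, possibly changing the order) and that Hausdorff distance scales linearly. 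Choosing $\eps'$ small enough relative to $\eps$ and the scaling factor, and choosing $\eta$ (the polynomial-approximation error) at most $\eps/2$, one gets $K\subset\Lambda(u_\eps)$ from the inclusion $(\{f=0\}\times\{0\})\cap B_1\subset\Lambda(u)$ together with $K\subset\{f=0\}$, and $\dH(\Lambda(u_\eps),K)\le\eps$ from the outer inclusion $\Lambda(u)\subset\{{\rm dist}(\cdot,\{f=0\})\le\eps'\}$ plus $\dH(\{f=0\},K)\le\eta$ and the triangle inequality.

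For the free boundary statement $\dH(\Gamma(u_\eps),\partial_{\R^n}K)\le\eps$, I would argue that the free boundary $\Gamma(u)$ is squeezed between the topological boundaries of the inner and outer sets appearing in Proposition~\ref{prop:polysets}(ii): since $(\{f=0\}\times\{0\})\cap B_1\subset\Lambda(u)$ and $\Lambda(u)$ is contained in a thin neighborhood of $\{f=0\}$, any free boundary point must be within distance $\eps'$ of $\{f=0\}$, hence within $\eps'+\eta$ of $K$; conversely, points of $\partial_{\R^n}K$ cannot lie deep inside $\Lambda(u_\eps)$ nor far from it (the inner inclusion forces $\Lambda(u_\eps)$ to at least reach $K$, and the outer inclusion forbids it from extending past a thin collar), so every point of $\partial_{\R^n}K$ is within $\eps$ of $\Gamma(u_\eps)$. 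This requires a small amount of care—one must know that $\Lambda(u_\eps)$ genuinely has nonempty interior near the ``bulk'' of $K$ so that its boundary tracks $\partial_{\R^n}K$ rather than collapsing—and this is exactly where I expect the main subtlety to lie, and where I would lean on Remark~\ref{rem:mhom} or on quantitative information from the proof of Proposition~\ref{prop:polysets} (namely that the constructed $\Lambda(u)$ contains a fixed neighborhood of $\{f=0\}\cap B_1$, not merely $\{f=0\}\cap B_1$ itself) to pin down the two-sided control. Modulo this point, the rest is bookkeeping with Hausdorff distances and scaling.
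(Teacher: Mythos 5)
There is a genuine gap, and it is at the very first step. Your pivot is the claim that for every $\eta>0$ there is a polynomial $f\in\mathcal{P}_n$, $f\ge 0$, with $K\subset\{f=0\}$ and $\dH(\{f=0\}\times\{0\},K)\le\eta$. This is impossible for a general compact set: a nonzero polynomial cannot vanish on any set that is not contained in a proper real algebraic variety, so already for $K=\overline{B_{1/2}'}$ (or any $K$ with nonempty interior, or a generic arc) the condition $K\subset\{f=0\}$ forces $f\equiv 0$, and then $\{f=0\}=\R^n$ is not Hausdorff-close to $K$. Composing with ${\rm dist}(\cdot,K)$ does not yield a polynomial, and Stone--Weierstrass gives uniform approximation, never exact vanishing on $K$. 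Even if you abandon the inclusion $K\subset\{f=0\}$ and only ask for Hausdorff approximation of $K$ by $\{f=0\}$ (which is possible, e.g.\ via finite $\eps$-nets), the route through Proposition~\ref{prop:polysets} still cannot prove the theorem: zero sets of nonzero polynomials are Lebesgue-null, so the contact set it produces is trapped in a thin collar around $\{f=0\}$; its free boundary then shadows all of $K$ rather than $\partial_{\R^n}K$ (for $K$ a closed ball, $\Gamma(u_\eps)$ would contain points near the center, at distance comparable to the radius from $\partial_{\R^n}K$), so the estimate $\dH(\Gamma(u_\eps),\partial_{\R^n}K)\le\eps$ fails, and $K\subset\Lambda(u_\eps)$ is also lost. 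Your hoped-for fix---that the construction actually yields a contact set containing a fixed neighborhood of $\{f=0\}\cap B_1$---is not available: the outer inclusion in Proposition~\ref{prop:polysets} expressly confines $\Lambda(u)$ to an $\eps$-neighborhood of $\{f=0\}$.

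The paper avoids this by working with \emph{sublevel} sets rather than zero sets. It first proves a separate statement, Proposition~\ref{prop:polysubsets}: for a polynomial $f$ with $\{f<0\}$ compact there is a global solution with $\{f\le-\delta\}\times\{0\}\subset\Lambda(u_\delta)\subset\{f\le 0\}\times\{0\}$ (built by solving \eqref{eq:TOP_-p} with obstacles $p_{2k}(f)$ that converge to the characteristic function of $\{f<-\delta\}$, and using Hopf's lemma plus $C^1_{\rm loc}$ convergence to show the contact set fills $\{f<-2\delta\}$). Then, to prove Theorem~\ref{thm:main2}, it approximates the $1$-Lipschitz function $d(x')={\rm dist}(x',K')$ uniformly on a ball by a polynomial, adds $\bigl(\tfrac32|x'|\bigr)^{2k}$ to control behavior outside $B_1'$ and subtracts a small constant, so that $\{f\le 0\}$ is a \emph{fat} set squeezed between $K'$ and its $\bar\eta$-neighborhood; the elementary level-set Lemma~\ref{lem:abs} then converts the inclusions $\{f\le\delta\}\subset\Lambda(u_\delta)\subset\{f\le 2\delta\}$ into the two Hausdorff estimates, including the one for the free boundary. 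So while your overall architecture (polynomial approximation of the distance function, an existing construction proposition, Hausdorff bookkeeping and scaling) parallels the paper, you invoked the wrong construction: Proposition~\ref{prop:polysets} concerns zero level sets and cannot produce contact sets that contain a fat $K$ with free boundary tracking $\partial_{\R^n}K$; the sublevel-set statement Proposition~\ref{prop:polysubsets} (or something equivalent) is the missing ingredient you would need to prove.
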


\begin{rem}
Equivalently, global solutions to the thin obstacle problem with \textit{analytic} obstacles exhibit the same flexibility. Previously, such results were only known in the setting of \(C^\infty\) obstacles; see \cite[Proposition 5.2]{FR21}.
\end{rem}

\subsection{Structure of the paper} The paper is structured as follows: 

We start with some preliminaries in Section~\ref{sec:prelim}. In Section~\ref{sec:mainthm} we prove Theorem~\ref{thm:main1}. In Section~\ref{sec:conj}, we prove a characterization of the contact set for cubic solutions, and pose a conjecture on the shape of positivity contact sets for solutions with cubic growth.  In Section~\ref{sec:nonconvex} we construct some examples of global solutions with nonconvex and compact contact set, and prove Lemma~\ref{lem:starshaped} and Proposition~\ref{prop:polysets}. Finally, in Section~\ref{sec:6}, we prove Theorem~\ref{thm:main2}.

\section{Preliminaries}
\label{sec:prelim}

Let us start by recalling and proving some preliminary considerations that will be useful later. 

\subsection{Notation} In this work, $x\in \R^{n+1}$ and we will denote $x = (x', x_{n+1})\in \R^n\times \R$. 

Likewise, we denote $B_r'(x_\circ')\subset \R^n$ the ball of radius $r > 0$ centered at $x_\circ'\in \R^n$ in dimension $n$, and $B_r(x_\circ)\subset \R^{n+1}$ the analogue in dimension $n+1$. 

Finally, it will be convenient to define the following one-sided normal derivative, for a given function $u:\R^{n+1}\to \R$, and a given point $x = (x', 0) \in \R^{n+1}\cap \{x_{n+1} = 0\}$:
\begin{equation}
\label{eq:norm_onesided}
\partial_{n+1}^+ u(x) = \lim_{x_{n+1} \to 0^+} \partial_{n+1} u(x', x_{n+1}). 
\end{equation}

\subsection{Classification of global solutions with compact contact set}

As already mentioned in the introduction, the main result in \cite{ERW} establishes the correspondence between global solutions to \eqref{eq:main_TOP} with polynomial growth and compact contact sets, and even (in $x_{n+1}$)  harmonic polynomials $p$ with bounded negative set on $\{x_{n+1} = 0\}$.

 That is, let us denote  
\[
\mathcal{P}^e_{n+1} := \left\{
\begin{array}{l}
 p \in \mathcal{P}_{n+1} : \Delta p = 0, ~~ p\text{ is even in $x_{n+1}$},\\[2mm]
\hspace{2.6cm} \overline{\{  p(x', 0) < 0\}}~~\text{is compact}
\end{array}
\right\}.
\]

Then, \cite[Theorem 1]{ERW} states that a global solution $u$ to \eqref{eq:main_TOP} with polynomial growth has compact contact set   if and only if 
\begin{equation}
\label{eq:main_TOP_2}
|u(x) - p(x)|\to 0\quad\text{as}\quad |x|\to \infty,\quad\text{for some $p\in \mathcal{P}^e_{n+1}$}. 
\end{equation}
Moreover, given any $p\in \mathcal{P}^e_{n+1}$ one can construct a unique $u$ solution to \eqref{eq:main_TOP} with $|u(x) - p(x)|\to 0$ as $|x|\to \infty$. 

More precisely, we have (see also the Appendix in \cite{EY}):
\begin{thm}[\cite{ERW}]
\label{thm:ERW}
Let $n \ge 2$, and let $u$ be a solution to \eqref{eq:main_TOP} with polynomial growth and  compact contact set. Then, there is a unique polynomial $p\in \mathcal{P}^e_{n+1}$ such that 
\begin{equation}
\label{eq:vp}
u(x) = u_p(x) = p(x) +v_p(x), 
\end{equation}
where $v_p(x)$ is the unique solution to  the thin obstacle problem with obstacle equal to $-p$: 
\begin{equation}
\label{eq:TOP_-p}
\left\{
\begin{array}{rcll}
v_p & \ge& -p & \quad\text{on}\quad \{x_{n+1} = 0\}\\
\Delta v_p & = & 0 & \quad\text{in}\quad \R^{n+1}\setminus \{x_{n+1} = 0, u = -p\}\\
\Delta v_p  &  \le & 0&  \quad\text{in}\quad \R^{n+1}\\
v_p(x', x_{n+1}) & =&  v_p(x', -x_{n+1})& \quad \text{in}\quad \R^{n+1}\\
v_p(x) & \to & 0& \quad\text{as}\quad |x|\to \infty. 
\end{array}
\right.
\end{equation}

Conversely, for any $p\in \mathcal{P}^e_{n+1}$ we have that \eqref{eq:vp} defines a solution to \eqref{eq:TOP_-p} with bounded contact set. 
\end{thm}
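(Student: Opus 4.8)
The plan is to split $u$ into a polynomial part plus a Newtonian potential. First, since $\Delta u\le 0$ on $\R^{n+1}$, the distribution $\mu:=-\Delta u$ is a nonnegative Radon measure; as $u$ is harmonic off the compact contact set, $\mathrm{supp}\,\mu\subset\Lambda(u)$, so $\mu$ is finite. Because $n\ge 2$, i.e. $n+1\ge 3$, the fundamental solution $\Phi$ of $-\Delta$ on $\R^{n+1}$ is positive and decays at infinity, so $v:=\Phi*\mu$ is well defined, nonnegative, satisfies $-\Delta v=\mu$, and $v(x)\to 0$ as $|x|\to\infty$. Then $p:=u-v$ is distributionally harmonic, hence classically harmonic on $\R^{n+1}$ by Weyl's lemma, and it has polynomial growth because $u$ does and $v$ is bounded; by a Liouville-type theorem $p$ is a polynomial. (In particular $v=u-p$ is continuous across $\{x_{n+1}=0\}$.)

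Second, I would check that $p\in\mathcal{P}^e_{n+1}$ and $v=v_p$: harmonicity is built in; $p$ is even in $x_{n+1}$ because $u$, $\mu$, $\Phi$ are; and on $\{x_{n+1}=0\}$ we have $0\le u=p+v$ with $v\ge 0$, $v\to 0$ at infinity, so $\liminf_{|x'|\to\infty}p(x',0)\ge 0$, whence --- since a polynomial on $\R^n$ with this property has relatively compact negativity set --- $\overline{\{p(x',0)<0\}}$ is compact. Putting $v_p:=u-p=v$: the inequality $v_p\ge -p$ on $\{x_{n+1}=0\}$ is equivalent to $u\ge 0$ there; $\Delta v_p=\Delta u\le 0$ vanishes off $\{x_{n+1}=0,\ u=0\}=\{x_{n+1}=0,\ v_p=-p\}$; $v_p$ is even; and $v_p\to 0$. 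Hence $v_p$ solves \eqref{eq:TOP_-p}, which gives \eqref{eq:vp}. Uniqueness of $p$ is immediate: two choices differ by a polynomial tending to $0$ at infinity, hence by $0$. Uniqueness of $v_p$ for a fixed $p$ follows from the comparison principle --- the positive part of the obstacle $-p$ being compactly supported, \eqref{eq:TOP_-p} admits a least superharmonic supersolution vanishing at infinity, and any solution equals it (maximum principle on the unbounded complement of the coincidence set, using the decay at infinity).

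For the converse, given $p\in\mathcal{P}^e_{n+1}$ I would construct $v_p$ by exhaustion. Fix $R_0$ with $\overline{\{p(x',0)<0\}}\subset B_{R_0}'$, and for $R>R_0$ let $v^R$ solve the thin obstacle problem in $B_R$ with obstacle $-p$ and zero boundary data. Then $v^R\ge 0$ by the minimum principle, $v^R$ is nondecreasing in $R$ by comparison, and $v^R\le\Psi$, where $\Psi(x):=M\min\{1,(R_0/|x|)^{n-1}\}$ with $M:=\max\{0,\sup_{B_{R_0}'}(-p(\cdot,0))\}$ is a superharmonic barrier on $\R^{n+1}$ lying above $-p$ on $\{x_{n+1}=0\}$ and vanishing at infinity. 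Letting $R\to\infty$, the monotone --- hence, by interior regularity for the thin obstacle problem, locally uniform --- limit $v_p:=\lim_R v^R$ is continuous, superharmonic, $\ge -p$ on $\{x_{n+1}=0\}$, harmonic off $\{x_{n+1}=0,\ v_p=-p\}$, even, and satisfies $0\le v_p\le\Psi\to 0$; so $v_p$ is the (unique) solution to \eqref{eq:TOP_-p}. Finally $u_p:=p+v_p$ satisfies \eqref{eq:main_TOP} exactly as in the second step, has polynomial growth since $v_p$ is bounded, and $\Lambda(u_p)=\{x_{n+1}=0,\ v_p=-p\}\subset\{p(x',0)\le 0\}$; using that $v_p>0$ wherever the obstacle is nontrivially active (Riesz decomposition), this improves to $\Lambda(u_p)\subset\overline{\{p(x',0)<0\}}$, which is compact.

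The hard part will be the step ``$\liminf_{|x'|\to\infty}p(x',0)\ge 0$'' $\Rightarrow$ ``$\overline{\{p(x',0)<0\}}$ is compact'': a priori a polynomial may remain negative while tending to $0$ along a sequence escaping to infinity, and ruling this out is not a soft estimate but a genuine fact about the asymptotics of polynomials, proved e.g.\ by curve selection together with a Puiseux/Newton-polygon analysis of the restriction of $p$ to real analytic curves at infinity. A secondary point requiring care is the comparison-principle uniqueness for \eqref{eq:TOP_-p} and the upgrade from ``$p$ has bounded negativity set'' to ``$u_p$ has bounded contact set'', both of which hinge on the strict positivity of $v_p$ away from the trivial case $v_p\equiv 0$.
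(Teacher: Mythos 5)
Since the paper does not prove Theorem~\ref{thm:ERW} (it is quoted from \cite{ERW}, with a pointer to the appendix of \cite{EY}), there is no in-paper argument to compare with, and I judge your proposal on its own merits. The architecture is the natural one: $\mu=-\Delta u$ is a finite nonnegative measure on the compact contact set, $v=\Phi*\mu$ decays, $p=u-v$ is a harmonic polynomial by Liouville, and the converse goes by exhaustion plus a decaying barrier. However, the step you yourself flag as ``the hard part'' is not merely hard: the statement you propose to prove there is false. It is not true that a polynomial $q$ on $\R^n$ with $\liminf_{|x'|\to\infty}q(x')\ge 0$ has bounded negativity set. For instance, in $n=2$ take
\[
q(x_1,x_2)=(x_1x_2-1)^2-x_2^4+x_2^6 .
\]
Then $q(1/t,t)=-t^4(1-t^2)<0$ for $0<|t|<1$, so $\{q<0\}$ is unbounded; yet for every $\delta>0$ the set $\{q\le-\delta\}$ is bounded, since $q\le -\delta$ forces $x_2^4(1-x_2^2)\ge\delta$ (hence $|x_2|\ge c(\delta)>0$) and $|x_1x_2-1|\le x_2^2\le 1$ (hence $|x_1|\le 2/c(\delta)$); therefore $\liminf_{|x|\to\infty}q\ge 0$. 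No curve-selection/Puiseux argument can rescue the implication, because the implication itself fails. Worse, the stronger quantitative information actually available in your setup, namely $p(x',0)\ge -v(x',0)\ge -C|x'|^{1-n}$, does not help either: the negative part of the $q$ above decays like $|x'|^{-4}$, faster than $|x'|^{-1}$. So the compactness of $\overline{\{p(\cdot,0)<0\}}$ --- which is precisely the nontrivial content of the forward direction --- cannot be extracted from a one-sided asymptotic bound on $p$ along the thin space; a correct proof must use the fact that $u$ solves the thin obstacle problem (equivalently, finer properties of the potential $v$ and of the coincidence set) in an essential way, and your proposal gives no indication of how. As written, the forward direction is incomplete at its decisive point.

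A secondary remark on the converse: $v_p\ge 0$ only yields $\Lambda(u_p)\subset\{p(\cdot,0)\le 0\}$, and the improvement to $\Lambda(u_p)\subset\{p(\cdot,0)<0\}$ via strict positivity of $v_p$ genuinely requires $\{p(\cdot,0)<0\}\neq\varnothing$: in the degenerate case $v_p\equiv 0$ (e.g., $p$ the even harmonic extension of $(x_1x_2)^2$, which lies in $\mathcal{P}^e_{n+1}$) the contact set of $u_p=p$ is $\{p(\cdot,0)=0\}$ and may be unbounded. Your parenthetical caveat about ``the trivial case $v_p\equiv 0$'' is therefore not optional bookkeeping but a hypothesis that must be made explicit (as it is, implicitly, in the setting of \cite{ERW}). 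The remaining parts of your converse --- monotonicity of the exhaustion, the superharmonic barrier $\Psi$, and the passage to the limit --- are fine.
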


Notice that, in the previous statement, the contact set of $u$ is the same as the contact set of $v_p$, which is the set $(\{v_p = -p\}\cap \{x_{n+1} = 0\})\times \{0\}$.

It is then further shown that a global solution to \eqref{eq:main_TOP} with compact contact set and quadratic growth is necessarily convex (in the directions of the thin space), and in particular, the contact set is convex.  In \cite{EY} it is then proved that, in that case, the contact set must be an ellipse. It is finally conjectured that, such a convexity  is lost with higher-order growths (and in particular, it can fail at quartic growth already).

\subsection{Comparison principle} 
We also recall the following comparison principle for  solutions to the thin obstacle problem (see, for example, \cite[Lemma 5]{ERW}):
\begin{lem}
\label{lem:comp}
Let $U$ be a bounded domain, even in $x_{n+1}$, let $\varphi\in C(U\cap\{x_{n+1} = 0\})$, and let $v_1, v_2\in C(U)$ be two (viscosity) solutions to 
\begin{equation}
\label{eq:comp}
\left\{
\begin{array}{rcll}
v_i & \ge& \varphi & \quad\text{on}\quad U\cap \{x_{n+1} = 0\}\\
v_i(x', x_{n+1}) & =&  v_i(x', -x_{n+1})& \quad \text{in}\quad U\\
\Delta v_i & \le & 0 & \quad\text{in}\quad U\\
\Delta v_1 & = & 0&  \quad\text{in}\quad U\setminus\{x_{n+1} = 0, v_1 = \varphi\},
\end{array}
\right.
\end{equation}
for $i = 1, 2$, such that 
\[
v_1 \le v_2 \quad\text{on}\quad \partial U. 
\] 
Then,
\[
v_1 \le v_2\quad\text{in}\quad U. 
\]
\end{lem}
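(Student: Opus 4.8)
The plan is a maximum-principle argument that exploits the asymmetry built into \eqref{eq:comp}: only $v_1$ is required to be harmonic away from its own contact set, whereas $v_2$ is merely superharmonic. Set $w := v_1 - v_2$ and argue by contradiction, assuming $M := \sup_{\overline U} w > 0$ (here one should fix once and for all the precise sense in which ``$v_1 \le v_2$ on $\partial U$'' is meant: either $v_1, v_2$ extend continuously to $\overline U$, or $\limsup_{x\to z} w(x) \le 0$ for every $z \in \partial U$; in either case $w \le 0$ on $\partial U$ and, $U$ being bounded, the positive supremum $M$ is attained at an interior point).

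Next I would localize away from the contact set of $v_1$. Put $\Lambda_1 := \{x_{n+1}=0,\ v_1 = \varphi\}\cap U$, which is relatively closed in $U$ since $v_1$ and $\varphi$ are continuous, and let $\Omega := U \setminus \Lambda_1$, an open set. On $\Lambda_1$ one has $w = \varphi - v_2 \le 0$ because $v_2 \ge \varphi$ on $\{x_{n+1}=0\}$, so $w < M$ on $\Lambda_1$ and therefore the maximum $M$ is attained at some $x_0 \in \Omega$. On $\Omega$, $v_1$ is harmonic — hence smooth by elliptic regularity — while $v_2$ is a viscosity supersolution of $\Delta v_2 \le 0$. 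A short touching-function check then shows $w$ is a viscosity subsolution of $\Delta w = 0$ in $\Omega$: if a smooth $\phi$ touches $w$ from above at a point $y\in\Omega$, then $\psi := v_1 - \phi$ is smooth and touches $v_2$ from below at $y$, so $\Delta\psi(y) \le 0$, i.e. $\Delta\phi(y) \ge \Delta v_1(y) = 0$. Consequently $w$ is subharmonic in $\Omega$.

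Finally I would invoke the maximum principle for subharmonic functions on the bounded open set $\Omega$. Since $\overline\Omega \subset \overline U$ is compact and $w$ is continuous there, $w$ attains its maximum over $\overline\Omega$ on $\partial\Omega$. Every point of $\partial\Omega$ lies either in $\partial U$, where $w \le 0$, or in $\Lambda_1$ (because $\Lambda_1$ is exactly the relatively closed set removed from $U$), where again $w \le 0$; hence $\max_{\overline\Omega} w \le 0$, contradicting $w(x_0) = M > 0$. Therefore $M \le 0$, i.e. $v_1 \le v_2$ in $U$. The only genuinely delicate points are the two pieces of bookkeeping flagged above: pinning down the meaning of the boundary inequality so that $M$ is attained and the maximum principle applies up to $\partial\Omega$, and the verification that $w$ is subharmonic in the viscosity sense, which legitimately uses the smoothness of the harmonic function $v_1$ to subtract it from the supersolution $v_2$. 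Neither obstacle requires anything beyond the continuity and the structure already encoded in \eqref{eq:comp}.
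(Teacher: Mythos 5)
Your argument is correct: the paper itself does not prove this lemma but simply cites \cite[Lemma 5]{ERW}, and your proof is the standard comparison argument for the thin obstacle problem --- on the contact set $\{v_1=\varphi\}$ one has $v_1-v_2=\varphi-v_2\le 0$, while off it $v_1$ is harmonic and $v_2$ superharmonic, so $v_1-v_2$ is subharmonic there and the weak maximum principle on $U\setminus\{x_{n+1}=0,\ v_1=\varphi\}$ concludes. Your two flagged bookkeeping points (interpreting the boundary inequality via continuous extension or via $\limsup$, and the viscosity verification of subharmonicity using the smoothness of $v_1$) are handled adequately, so no gap remains.
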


As a consequence of the previous result, we get the comparability of global solutions to the thin obstacle problem:
\begin{cor}
\label{cor:comp}
Let $\varphi\in C(\{x_{n+1} = 0\})$, and let $v_1, v_2\in C(\R^{n+1})$ be two global (viscosity) solutions to 
\begin{equation}
\label{eq:comp2}
\left\{
\begin{array}{rcll}
v_i & \ge& \varphi & \quad\text{on}\quad \R^{n+1}\cap \{x_{n+1} = 0\}\\
v_i(x', x_{n+1}) & =&  v_i(x', -x_{n+1})& \quad \text{in}\quad \R^{n+1}\\
\Delta v_i & \le & 0 & \quad\text{in}\quad \R^{n+1}\\
\Delta v_1 & = & 0&  \quad\text{in}\quad \R^{n+1}\setminus\{x_{n+1} = 0, v_1 = \varphi\},
\end{array}
\right.
\end{equation}
for $i = 1, 2$. Let us suppose, moreover, that 
\[
\liminf_{|x|\to \infty}\,  (v_2 - v_1)(x) \ge 0. 
\]
Then,
\[
v_2 \ge v_1\quad\text{in}\quad \R^{n+1}. 
\]
\end{cor}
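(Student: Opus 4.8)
The plan is to deduce the global statement from the local comparison principle of Lemma~\ref{lem:comp}, applied on large balls $B_R$, after absorbing the defect at infinity into an arbitrarily small constant. First I would fix $\eps > 0$. Since $\liminf_{|x|\to\infty}(v_2 - v_1)(x) \ge 0$, there exists $R_\eps > 0$ such that $v_2 - v_1 \ge -\eps$ on $\R^{n+1}\setminus B_{R_\eps}$; in particular $v_1 \le v_2 + \eps$ on $\partial B_R$ for every $R \ge R_\eps$.

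Next I would observe that $w := v_2 + \eps$ is still an admissible competitor for the role played by ``$v_2$'' in Lemma~\ref{lem:comp} on $U = B_R$: it is continuous, even in $x_{n+1}$, satisfies $w \ge \varphi + \eps \ge \varphi$ on $B_R\cap\{x_{n+1}=0\}$, and $\Delta w = \Delta v_2 \le 0$ in $B_R$. Crucially, Lemma~\ref{lem:comp} requires the equation off the contact set only of the first function, and $v_1$ is a genuine solution of \eqref{eq:comp2} by hypothesis, hence of \eqref{eq:comp} on $B_R$. Applying Lemma~\ref{lem:comp} with obstacle $\varphi$, first function $v_1$, and second function $w$ on $B_R$ then yields $v_1 \le v_2 + \eps$ in $B_R$, for every $R \ge R_\eps$.

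Finally, letting $R \to \infty$ gives $v_1 \le v_2 + \eps$ in all of $\R^{n+1}$, and then letting $\eps \to 0$ concludes $v_1 \le v_2$ in $\R^{n+1}$, as claimed.

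I do not expect a genuine obstacle here; the only points that need a line of care are: (a) checking that the constant shift $v_2 \mapsto v_2 + \eps$ preserves exactly the supersolution hypotheses demanded of the second function in Lemma~\ref{lem:comp} (the inequality against $\varphi$, the evenness, and superharmonicity), which it plainly does; and (b) noting that the asymmetry of the hypotheses—only $v_1$ is required to be harmonic off its own contact set—is precisely what legitimizes the asymmetric comparison of $v_1$ against $v_2 + \eps$. It is also worth remarking, without proof, that the condition at infinity is not removable: dropping it, one can compare a solution with itself plus a nontrivial nonnegative harmonic perturbation and destroy the ordering.
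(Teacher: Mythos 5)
Your argument is correct and is essentially the paper's own proof: both absorb the defect at infinity into a small constant added to $v_2$ (the paper uses a modulus $\omega_R\downarrow 0$, you fix $\eps$ and then send $R\to\infty$ and $\eps\to 0$), apply Lemma~\ref{lem:comp} on large balls $B_R$ with the shifted supersolution, and pass to the limit. The points you flag---that the constant shift preserves the hypotheses on the second function and that only $v_1$ needs to solve the equation off its contact set---are exactly the ones the paper implicitly relies on.
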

\begin{proof}
By assumption, 
\[
\inf_{\R^{n+1}\setminus B_R} (v_2 - v_1)\ge - \omega_R\quad\text{for all}\quad R > 0,
\]
 for some $\omega_R \downarrow 0$ as $R\to \infty$. 
 
 We therefore can apply Lemma~\ref{lem:comp}  to the functions $v_1$ and $v_2 +\omega_R$, which satisfy \eqref{eq:comp} with $U = B_R$, to deduce 
 \[
 v_2+\omega_R \ge v_1 \quad\text{in}\quad B_R. 
 \]
 Letting $R\to \infty$ we obtain the desired result. 
\end{proof}

\section{Proof of Theorem ~\ref{thm:main1}}
\label{sec:mainthm}

Let us start by proving the result on the classification of global solutions with bounded positivity set. Before that, we state and prove the following Liouville-type result for exterior domains:

\begin{lem}
\label{lem:Liouv}
Let $n\ge 3$, and let $u:\R^n\to \R$ be a function with polynomial growth of order $m$ (recall \eqref{eq:poly_m}) such that 
\[
\Delta u = 0\quad\text{in}\quad \R^n\setminus B_1. 
\]
Then, there exists a polynomial $p$ of order $m$ such that 
\[
\begin{split}
|u(x) - p(x)| &= O(|x|^{2-n})\\
|\nabla u(x) - \nabla p(x)| &= O(|x|^{1-n}).
\end{split}
\]
Moreover, if $u$ is odd (resp. even) with respect to $x_1$, then $p$ is odd (resp. even) with respect to $x_1$ as well. 
\end{lem}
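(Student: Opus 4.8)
The plan is to prove this Liouville-type result by the classical Kelvin transform, reducing the exterior problem to an interior one near the origin. First I would set $v(x) = |x|^{2-n} u(x/|x|^2)$, the Kelvin transform of $u$, which is harmonic in the punctured ball $B_1 \setminus \{0\}$ since $u$ is harmonic in $\R^n \setminus B_1$. The polynomial growth bound $|u(x)| \lesssim 1 + |x|^m$ translates into the bound $|v(x)| \lesssim |x|^{2-n} + |x|^{2-n-m} \lesssim |x|^{2-n-m}$ near the origin, so $v$ has at worst a pole of order $n-2+m$ at $0$. By the standard theory of isolated singularities of harmonic functions (expansion into homogeneous harmonic polynomials and their Kelvin transforms, i.e. a Laurent-type expansion), $v$ can be written near $0$ as $v = h + \sum_{k} c_k \, Y_k(x/|x|) |x|^{2-n-k}$ where $h$ is harmonic in all of $B_1$ and the sum runs over finitely many $k \le m$ (because of the growth bound on $v$); here $Y_k$ are spherical harmonics of degree $k$.

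Next I would transform back. Applying the Kelvin transform again, $u(x) = |x|^{2-n} v(x/|x|^2)$, the smooth part $h$ of $v$ becomes $|x|^{2-n} h(x/|x|^2)$, which is $O(|x|^{2-n})$ and harmonic in the exterior, while each singular term $c_k Y_k(x/|x|)|x|^{2-n-k}$ becomes $c_k Y_k(x/|x|) |x|^{k}$, i.e. a homogeneous harmonic polynomial of degree $k \le m$. Collecting these gives a polynomial $p$ of degree at most $m$ with $u - p = |x|^{2-n} h(x/|x|^2) = O(|x|^{2-n})$. For the gradient estimate, since $u - p$ is harmonic in $\R^n \setminus B_2$ and bounded there by $C|x|^{2-n}$, interior gradient estimates for harmonic functions on balls $B_{|x|/2}(x)$ give $|\nabla(u-p)(x)| \le C|x|^{-1} \sup_{B_{|x|/2}(x)} |u - p| \lesssim |x|^{-1} \cdot |x|^{2-n} = |x|^{1-n}$. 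The parity statement is immediate: if $u$ is odd (resp. even) in $x_1$, then so is its Kelvin transform $v$, hence so is the harmonic part $h$ and each spherical-harmonic coefficient term, and therefore so is $p$; alternatively one observes that $p(x) - p(\tilde x)$ (with $\tilde x$ the reflection) is a polynomial that is $O(|x|^{2-n})$, hence zero.

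I should be a bit careful about one subtlety: showing that only finitely many terms (those with $k \le m$) appear in the Laurent expansion of $v$, and that the remaining "regular" part $h$ is genuinely harmonic across $0$. This follows because if $v$ had a term $c_k Y_k |x|^{2-n-k}$ with $k > m$ with $c_k \neq 0$, then near $0$ one would have $|v(x)| \gtrsim |x|^{2-n-k}$ along a suitable direction, contradicting $|v(x)| \lesssim |x|^{2-n-m}$; and subtracting off the finitely many singular terms leaves a harmonic function on $B_1 \setminus \{0\}$ that is $o(|x|^{2-n})$, hence extends harmonically to $B_1$ by the removable singularity theorem. The main obstacle, then, is not conceptual but bookkeeping: organizing the spherical-harmonic / Laurent expansion cleanly and tracking degrees through the two Kelvin transforms so that the final object is verified to be a polynomial of order exactly $\le m$ with the stated decay rates for both $u - p$ and $\nabla u - \nabla p$. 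Everything else is standard harmonic function theory, and the hypothesis $n \ge 3$ is used precisely so that $|x|^{2-n} \to 0$ at infinity and the Kelvin transform has the right decay.
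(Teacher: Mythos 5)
Your argument is correct, but it follows a genuinely different route from the paper. You invert via the Kelvin transform and invoke the Laurent/B\^ocher-type expansion of a harmonic function at an isolated singularity, whereas the paper never inverts: it cuts $u$ off near $B_1$ with a radial cutoff, observes that $f=\Delta\bar u$ is smooth and compactly supported, subtracts the Newtonian potential $\Gamma_n * f$ (which decays like $|x|^{2-n}$), and applies the classical Liouville theorem to the resulting entire harmonic function of polynomial growth; the gradient bound and the parity statement then come exactly as in your write-up (interior estimates on $B_{|x|/2}(x)$, and symmetry of the radial cutoff and of $\Gamma_n*f$, respectively). The paper's route is more self-contained---it needs only Liouville plus the decay of the Newtonian potential---while yours buys more structure: it produces the homogeneous pieces of $p$ explicitly as solid spherical harmonics, shows the remainder $u-p$ is itself an exterior harmonic function with a full expansion in Kelvin transforms, and yields the neat alternative parity argument that a polynomial which is $O(|x|^{2-n})$ at infinity vanishes identically. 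One small point of rigor in your sketch: to rule out singular terms $c_kY_k(x/|x|)\,|x|^{2-n-k}$ with $k>m$ you should not argue pointwise ``along a suitable direction'' (different-degree terms could conspire pointwise), but rather project $v$ onto the degree-$k$ spherical harmonics on spheres $\partial B_r$ and use orthogonality, which gives $|c_k|\,r^{2-n-k}\lesssim r^{2-n-m}$ and hence $c_k=0$ as $r\to0$; with that standard fix, and the removable-singularity step you already indicate for the regular part $h$, the proof is complete.
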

\begin{proof}
Let $\varphi\in C^\infty(\R_+)$ such that $\varphi\ge 0$, $\varphi(t) \equiv 1$ for $t\ge 3$, $\varphi \equiv 0$ for $t\in (0, 2)$. Then, take 
\[
\bar u(x) = \varphi(|x|) u(x)\quad\text{for}\quad x\in \R^n. 
\]

Since $\Delta u = 0$ in $\R^n\setminus B_1$, $u$ is smooth there, and $\bar u\in C^\infty(\R^n)$, with $\bar u \equiv  u$ in $\R^n\setminus B_3$ and $\bar u \equiv 0$ in $B_2$. Moreover, if $u$ is odd/even with respect to a coordinate, so is $\bar u$. 

Let 
\[
f(x) := \Delta \bar u(x), 
\]
so that $f\in C^\infty_c(B_3)$. If $\Gamma_n(x) $ denotes the fundamental solution to the Laplace equation in $\R^n$, we can define 
\[
w_f  := \Gamma_n * f,  
\]
which satisfies 
\[
\left\{
\begin{array}{rcll}
\Delta w_f & =& f&\quad\text{in}\quad \R^n,\\
|w_f(x) |& \le& C \|f\|_{L^\infty(B_3)}|x|^{2-n}&\quad\text{for all}\quad x\in \R^n,
\end{array}
\right.
\]
since $f$ is compactly supported and thanks to the decay of $\Gamma_n(x)$. Furthermore, $w_f$ has the same odd/even symmetries as $\bar u$ and $u$. 

Thus, $\bar u - w_f$ is a globally defined harmonic function with polynomial growth of order $m$. By the classical Liouville theorem, there exists a harmonic polynomial $p$ of degree $m$ such that $\bar u - w_f = p$, and therefore, 
\[
|\bar u(x) - p(x)| = O(|x|^{2-n}). 
\]
Since $u = \bar u$ in $\R^n\setminus B_3$ this proves the first equation, and by classical harmonic estimates applied in $B_{|x|/2}(x)$ we obtain the second equation. 
\end{proof}

We can now proceed with the proof of Theorem~\ref{thm:main1}:

\begin{proof}[Proof of Theorem~\ref{thm:main1}]
Let us divide the proof into four steps. 

We show first that, given   $q\in \mathcal{P}^o_{n+1}$, there is a unique solution to \eqref{eq:main_TOP} such that \eqref{eq:cond_inf_odd} holds and it has bounded positivity set $\{u > 0, x_{n+1} = 0\}$. In the last step, we show the converse implication.
\\[2mm]
\noindent {\bf Step 1}. 
The uniqueness of the solution follows from Corollary~\ref{cor:comp}: if there were two global solutions for which \eqref{eq:cond_inf_odd} holds for the same $q$, denoted $u_1$ and $u_2$, then by triangle inequality
\[ 
\|u_1 - u_2\|_{L^\infty(\R^{n+1}\setminus B_R)}  \to 0\quad\text{as}\quad R\to \infty,
\] 
and thanks to Corollary~\ref{cor:comp} (applied for both $(v_1, v_2) = (u_1, u_2)$ and $(v_1, v_2) = (u_2, u_1)$) we get $u_1 = u_2$. 
\\[2mm]
\noindent {\bf Step 2}.
Let us now prove the existence of a global solution, that will arise by compactness of the sequence $u_R$ as $R\to \infty$, where $u_R$ is the solution to 
\[
\left\{
\begin{array}{rcll}
u_R & \ge& 0 & \quad\text{on}\quad B_R\cap \{x_{n+1} = 0\}\\
\Delta u_R & = & 0 & \quad\text{in}\quad B_R\setminus \{x_{n+1} = 0, u_R = 0\}\\
\Delta  u_R &  \le & 0&  \quad\text{in}\quad B_R\\
u_R(x)& =&  -|x_{n+1}|q(x) & \quad \text{for}\quad x\in \partial B_R.
\end{array}
\right.
\]
 Indeed, since $\Delta u_R = \Delta (x_{n+1} q) = 0$ in $B_R^+ = B_R\cap \{x_{n+1} > 0\}$ and $u_R\ge 0$ on $B_R\cap \{x_{n+1} = 0\}$,  by maximum principle and even extension we have 
\[
-|x_{n+1}| q \le u_R\quad\text{in}\quad B_R.
\]

On the other hand, let us suppose that $\overline{\{q(x', 0) < 0\}}\subset B_\rho$, for some $\rho \ge 1$, and consider $\varphi\in C^\infty_c(B_{\rho+1})$ a radial test function with $\varphi \ge 0$ and $\varphi \equiv 1$ in $B_\rho$, and $\Phi$ to be a global solution to the thin obstacle problem with obstacle $\varphi$ vanishing at infinity (constructed, for example, by Perron's method\footnote{That is, $\Phi$ is the least supersolution above the obstacle on the thin space and nonnegative at infinity.}): 
\begin{equation}
\label{eq:Phi}
\left\{
\begin{array}{rcll}
\Phi & \ge& \varphi & \quad\text{on}\quad \{x_{n+1} = 0\}\\
\Delta \Phi & = & 0 & \quad\text{in}\quad \R^{n+1}\setminus \{x_{n+1} = 0, u = \varphi\}\\
\Delta \Phi &  \le & 0&  \quad\text{in}\quad \R^{n+1}\\
\Phi(x', x_{n+1}) & =&  \Phi(x', -x_{n+1})& \quad \text{in}\quad \R^{n+1}\\
\Phi(x) & \to & 0& \quad\text{as}\quad |x|\to \infty. 
\end{array}
\right.
\end{equation}

Then, by construction, $0\le \Phi \le 1$ in $\R^{n+1}$, $\partial^+_{n+1}\Phi \le 0$ on $\{x_{n+1} = 0\}$ (recall~\eqref{eq:norm_onesided}), and by Hopf's lemma, $\partial^+_{n+1}\Phi \le - c_\rho < 0$ on $\{x_{n+1} = 0\} \cap B_\rho$ for some $c_\rho > 0$. In particular, since we must have $q\ge -C_q>-\infty$ on $\{x_{n+1} = 0\}$ and $q\ge 0$ on $\{x_{n+1} = 0\}\setminus B_\rho$, there exists some $\kappa$ depending on $C_q$ and $\rho$ such that 
\[
-q(x) + \kappa\partial^+_{n+1}\Phi \le 0\quad\text{on}\quad \{x_{n+1} = 0\}. 
\]
(We can take $\kappa = {C_q}/{c_\rho}$.) This means that $\partial^+_{n+1}\left( -x_{n+1} q (x) +\kappa \Phi\right)\le 0$ on $\{x_{n+1} = 0\}$, it is harmonic on $\{x_{n+1} \neq 0\}$, and smooth on $\{x_{n+1} = 0\}$. Hence, $-|x_{n+1}| q (x) +\kappa \Phi$ is a supersolution, above $u_R$ on $\partial B_R$, and thus
\[
u_R\le -|x_{n+1}| q + \kappa \Phi\quad\text{in}\quad B_R. 
\]

In all, we have 
\[
-|x_{n+1}|q\le u_R \le -|x_{n+1}|q + \kappa\Phi \quad\text{in}\quad B_R
\]
for any $R\ge 1$. We can now take $R\to \infty$, and by compactness $u_R$ converges to a global solution to \eqref{eq:main_TOP} $u$, with $
-|x_{n+1}|q\le u \le -|x_{n+1}|q + \kappa\Phi 
$ in $\R^{n+1}$, and therefore, $|u+|x_{n+1}|q(x)|\to 0$ as $|x|\to \infty$ (since $\Phi(x) \to 0$ as $|x|\to \infty$). By the previous step, moreover, $u$ is unique. 
\\[2mm]
\noindent {\bf Step 3}.
Let us finish this first implication by showing that $u$ has compact positivity set on $\{x_{n+1} = 0\}$. Observe first that, since 
$\partial^+_{n+1} u(x', 0)  \le 0
$,
and $u\ge -|x_{n+1}|q$ in $\R^{n+1}$, if we had $u(x_\circ) = 0$ at some $x_\circ\in \{x_{n+1} = 0\}\cap \{q < 0\}$, then we would have 
\[
0\ge \partial^+_{n+1} u(x'_\circ, 0) \ge -q(x_\circ) > 0,
\]
a contradiction. Thus, $u > 0$ on $\{q < 0\}\cap \{x_{n+1} = 0\}$, and if $\{q < 0\}\cap \{x_{n+1} = 0\} \neq \varnothing$, then $u$ is not identically zero on the thin space. 

Let now $x_\circ\in \{x_{n+1} = 0\}$ with $|x_\circ|\ge R_\circ+1$, where $R_\circ>0$ depending only on $q$ is large enough so that $\kappa  \Phi  \le \frac{1}{2n}$   and $q \ge 1$ in $\R^{n+1}\setminus B_{R_\circ}$ (where $\Phi$ and $\kappa$ are defined in the previous step). Let us   consider a cylinder around $x_\circ$, 
\[
Q(x_\circ) := \{x\in \R^{n+1} : |x_{n+1}|< 1, ~~ |x'-x_\circ'|\le 1\}. 
\]

We will show that $u(x)\le \xi(x) := -|x_{n+1}|q(x) +|x_{n+1}|-\frac12 x_{n+1}^2+\frac{1}{2n}|x'-x_\circ'|^2$ for $x\in Q(x_\circ)$.  Indeed, first notice that since $q \ge 1$ in $Q(x_\circ)$, we have 
\[
\Delta\xi = (-2q(x) +2)\mathcal{H}^{n}(\{x_{n+1} = 0\})\le 0\quad\text{in}\quad Q(x_\circ),
\]
and $\xi$ is a supersolution. On the other hand, we have $u \le -|x_{n+1}|q+ \kappa\Phi \le -|x_{n+1}|q+\frac{1}{2n}\le\xi $ on $\partial Q(x_\circ)$. 
 
Since $u$ is a solution to the thin obstacle problem in $Q(x_\circ)$, the comparison principle, Lemma~\ref{lem:comp}, implies $u \le   \xi$. In particular, $0\le u(x_\circ) \le \xi(x_\circ) = 0$, and $x_\circ\notin\{u > 0\}\cap \{x_{n+1} = 0\}$. Since $x_\circ$ was arbitrary in $B_{R_\circ}\cap \{x_{n+1} = 0\}$, we deduce that $\R^{n+1}\setminus B_{R_\circ}\subset \{u = 0, x_{n+1} = 0\}$ for some $R_\circ > 0$, and $\{u > 0, x_{n+1} = 0\}$ is bounded. 
\\[2mm]
\noindent {\bf Step 4}. Let us finally show that if $u$ is a global solution with bounded positivity set, then \eqref{eq:cond_inf_odd} holds for some $q\in \mathcal{P}^o_{n+1}$.

Indeed, let $\{u > 0, x_{n+1} = 0\}\subset B_{\rho/2}\subset B_\rho$. Then, $u \equiv 0$ on $\{x_{n+1} = 0\}\setminus B_{\rho/2}$, and the odd extension 
\[
u^o(x) := \left\{\begin{array}{ll}
u(x) & \quad \text{if}\quad x_{n+1} \ge 0\\
-u(x) & \quad \text{if}\quad x_{n+1}< 0,
\end{array}
\right. 
\]
satisfies 
\[
\Delta u^o = 0\quad\text{in}\quad \R^{n+1}\setminus B_{\rho/2},
\]
and still has polynomial growth of order $m\in \N$, \eqref{eq:poly_m}. By Lemma~\ref{lem:Liouv}, there exists some harmonic polynomial $\bar q$, such that 
\[
\begin{split}
|u^o(x) - \bar q (x)| & = O(|x|^{1-n})\\
| \nabla u^o(x) - \nabla \bar q (x)| & = O(|x|^{-n}).
\end{split}
\]
Moreover, since $u^o$ is odd in $x_{n+1}$, so is $\bar q = x_{n+1} q$ (for some $q$ even in $x_{n+1}$ and of degree $m-1$), and we also have 
\[
\begin{split}
|u(x) +|x_{n+1}| q(x)|& = O({|x|^{1-n}})\\
|\partial_{n+1} u(x) +\partial_{n+1} (|x_{n+1}| q(x))|& = O({|x|^{-n}}),
\end{split}
\]
where $q$ is a polynomial even in $x_{n+1}$ and such that $x_{n+1} q$ is harmonic. Finally, if $\{q < 0, x_{n+1} = 0\}$ is not bounded, being $q$ a polynomial of degree at most $m-1$ on $\{x_{n+1} = 0\}$ we must have a sequence $x_k\in \{q < -1, x_{n+1} = 0\}$ with $|x_k|\to \infty$, and so by the previous estimate
\[
0\ge \partial^+_{n+1} u(x_k)\ge -q(x_k) - C|x_k|^{-n} \ge \frac12
\]
for $k$ large enough, reaching a contradiction.  Thus, $\{q < 0, x_{n+1} = 0\}$ is bounded, and $q\in \mathcal{P}^o_{n+1}$. 
\end{proof}

\section{A conjecture on cubic solutions}
\label{sec:conj}

Let us observe that, in the previous setting, we can give a characterization of solutions with compact positivity set when the   polynomials $q$ have a specific form. (See  Lemma~\ref{lem:nonconvex_starshaped} below for a counterpart for global solutions with compact contact set.)

\begin{lem}
\label{lem:nonconvex_starshaped2}
Let $n \ge 2$. Let $q\in \mathcal{P}^o_{n+1}$ be of the form $q = q^h - 1$ for some homogeneous polynomial $q^h$ of order $m\ge 2$. Then, the solution $u$ to \eqref{eq:main_TOP} given by Theorem~\ref{thm:main1} with $|u(x) +|x_{n+1}| q(x)\,  |\to 0$ as $|x|\to \infty$ has a bounded and star-shaped positivity set. 
\end{lem}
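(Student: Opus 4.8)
The plan is to exploit the monotonicity of $u$ along rays emanating from the origin, which should follow from a comparison argument against dilations of $u$ itself. Recall from Theorem~\ref{thm:main1} that the solution $u$ satisfies $-|x_{n+1}|q \le u \le -|x_{n+1}|q + \kappa\Phi$ and $|u(x) + |x_{n+1}|q(x)| \to 0$ as $|x|\to\infty$. Since $q = q^h - 1$ with $q^h$ homogeneous of degree $m$, the leading term $-|x_{n+1}|q(x) = -|x_{n+1}|q^h(x) + |x_{n+1}|$ behaves well under scaling: for $\lambda > 1$, the function $u_\lambda(x) := \lambda^{-(m+1)} u(\lambda x)$ solves the same thin obstacle problem \eqref{eq:main_TOP} (homogeneity of degree $m+1$ of the ``expected'' profile), and its behavior at infinity is governed by $-|x_{n+1}|q^h(x) + \lambda^{-m}|x_{n+1}|$, which is $\le -|x_{n+1}|q^h(x) + |x_{n+1}|$.

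First I would make this precise: show that $u_\lambda$ is a global solution to \eqref{eq:main_TOP} with polynomial growth, and that $\liminf_{|x|\to\infty}(u(x) - u_\lambda(x)) \ge 0$. The latter follows because $u(x) = -|x_{n+1}|q^h(x) + |x_{n+1}| + o(1)$ while $u_\lambda(x) = -|x_{n+1}|q^h(x) + \lambda^{-m}|x_{n+1}| + o(1)$, so the difference is $(1-\lambda^{-m})|x_{n+1}| + o(1) \ge o(1)$ — here one must be slightly careful because the $|x_{n+1}|$ term is not bounded, but on the region where $|x_{n+1}|$ is large the inequality is comfortably satisfied, and one can localize using the explicit bound $u \le -|x_{n+1}|q + \kappa\Phi$ together with the corresponding scaled bound for $u_\lambda$. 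Then Corollary~\ref{cor:comp} applied to $v_1 = u_\lambda$, $v_2 = u$ (with obstacle $\varphi \equiv 0$ on the thin space, noting $u_\lambda$ is harmonic off its own contact set) yields $u(x) \ge u_\lambda(x) = \lambda^{-(m+1)}u(\lambda x)$ for all $x$, i.e. $\lambda^{m+1} u(x) \ge u(\lambda x)$ for all $\lambda \ge 1$.

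Next I would translate this scaling inequality into star-shapedness of the positivity set $P := \{u > 0, x_{n+1} = 0\}$. Take $x_\circ' \in \R^n$ with $(x_\circ', 0) \in P$, so $u(x_\circ', 0) > 0$. For $0 < t < 1$, apply the inequality with $\lambda = 1/t > 1$ and base point $t x_\circ'$: we get $(1/t)^{m+1} u(t x_\circ', 0) \ge u(x_\circ', 0) > 0$, hence $u(t x_\circ', 0) > 0$, so $(t x_\circ', 0) \in P$. This shows $P$ is star-shaped with respect to the origin. Boundedness of $P$ is already part of the conclusion of Theorem~\ref{thm:main1}, so nothing new is needed there; one could also note it follows directly since $q \ge 0$ outside a compact set forces (via Step 3 of that proof) $u = 0$ far out on the thin space.

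\textbf{Main obstacle.} The delicate point is the application of the comparison principle at infinity, because the natural ``barrier difference'' $u - u_\lambda$ does not tend to a nonnegative constant uniformly — the competing profiles differ by the \emph{unbounded} term $(1-\lambda^{-m})|x_{n+1}|$, which is favorable but whose unboundedness means one cannot naively invoke $\liminf(v_2-v_1)\ge 0$ without justification. I expect the cleanest fix is to avoid comparing $u$ with $u_\lambda$ directly at infinity and instead compare $u_\lambda$ with $u + (1-\lambda^{-m})\kappa\Phi$ (or a similar explicit supersolution built from $\Phi$ as in Step 2 of the proof of Theorem~\ref{thm:main1}), using that $\kappa\Phi \to 0$ at infinity and dominates the error terms, so that the hypothesis of Corollary~\ref{cor:comp} holds genuinely; then let $\lambda \downarrow 1$ or argue the $\Phi$ term is harmless on the thin space. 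Verifying that the resulting function is indeed a supersolution to the thin obstacle problem (the right inequality for $\partial_{n+1}^+$ on $\{x_{n+1}=0\}$) is the one computation that needs care, but it parallels the barrier construction already carried out in Step 2.
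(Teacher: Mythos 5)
Your proposal is correct and follows essentially the same route as the paper: compare $u$ with its dilations $u_\lambda(x)=\lambda^{-(m+1)}u(\lambda x)$, $\lambda\ge 1$, whose profile at infinity corresponds to $q_\lambda=q^h-\lambda^{-m}\ge q$ because $q=q^h-1$, invoke the comparison at infinity (Corollary~\ref{cor:comp}) to order $u$ against $u_\lambda$, and read off star-shapedness of the positivity set from the resulting scaling inequality (your sign, $u\ge u_\lambda$, is indeed the one that yields star-shapedness of $\{u>0,\,x_{n+1}=0\}$). The ``main obstacle'' you flag is not actually an issue: Corollary~\ref{cor:comp} only requires $\liminf_{|x|\to\infty}(u-u_\lambda)\ge 0$, and since $u-u_\lambda=(1-\lambda^{-m})|x_{n+1}|+o(1)$ with the unbounded term of the favorable sign (the $o(1)$ coming from the uniform decay in \eqref{eq:cond_inf_odd} for $u$ and its rescaling), the hypothesis holds directly and no auxiliary barrier built from $\Phi$ is needed.
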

\begin{proof}
Let $u_r := r^{-m-1} u(r\, \cdot\,)$ for some $r \ge 1$. Then $u_r$ is a global solution to \eqref{eq:main_TOP} such that $| u_r + |x_{n+1}| q_r\, |\to 0$ as $|x|\to \infty$, where $q_r(x) = r^{-m}q(rx) = q^h(x) - r^{-m}\ge q(x)$, since $r\ge 1$. In particular, $u_r\ge u$ and we have 
\[
\{u_r = 0, x_{n+1} = 0\} = \frac{1}{r}\{u = 0, x_{n+1} = 0\} = \frac{1}{r}\Lambda(u) \subset \Lambda(u)\qquad\text{for all}\quad r \ge 1. 
\]
That is, $\{x_{n+1} = 0\}\setminus \Lambda(u)$ is star-shaped at the origin. 
\end{proof}

In particular, the previous result applies to the case of global cubic solutions with compact positivity set: in such a situation, the positivity set is necessarily star-shaped at some point. This is the only characterization of the positivity set for global cubic solutions available at the moment, under the assumption of having a bounded positivity set. We conjecture the following: 

 \begin{conj}
 Let $u$ be a global solution to the thin obstacle problem \eqref{eq:main_TOP} with cubic growth and a bounded positivity set on the thin space. Then, the positivity set on the thin space is convex, or more specifically, it is an ellipsoid. 
 \end{conj}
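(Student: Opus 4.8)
Since $u$ has cubic growth, Theorem~\ref{thm:main1} produces $q\in\mathcal{P}^o_{n+1}$ of degree exactly two, say $q(x)=q_2(x)+\beta\cdot x'+q_0$ with $q_2$ $2$-homogeneous; the constraint $\Delta(x_{n+1}q)=0$ pins the $x_{n+1}^2$-coefficient of $q_2$ to $-\tfrac13\,\mathrm{tr}\,M$, where $M$ is the Hessian of $q_2|_{\{x_{n+1}=0\}}$. The compactness of $\overline{\{q(x',0)<0\}}$ forces, unless this set is empty, $M\succ0$: an unbounded sublevel set of $q$ on the thin space would produce an unbounded positivity set exactly as in Step~4 of the proof of Theorem~\ref{thm:main1}. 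Granting $M\succ0$ and $\min_{x'}q(x',0)<0$, completing the square and applying the rescaling $u\mapsto r^{-3}u(r\,\cdot)$, which replaces $q$ by $r^{-2}q(r\,\cdot)$, together with a rotation normalizes $q$ to $q=q^h-1$ with $q^h(x)=\sum_i\lambda_i x_i^2-\tfrac13(\sum_i\lambda_i)x_{n+1}^2$, $\lambda_i>0$. Then Lemma~\ref{lem:nonconvex_starshaped2} already gives that $\Omega:=\{u>0,\,x_{n+1}=0\}$ is bounded, nonempty, and star-shaped about $0$, with $r^{-1}\Omega\subset\Omega$ for all $r\ge1$. (The degenerate case $\{q(x',0)<0\}=\varnothing$, equivalently $\min_{x'}q(x',0)\ge0$, falls outside the reach of Lemma~\ref{lem:nonconvex_starshaped2} and would have to be treated separately; it is plausibly genuinely harder, so I would focus on the main case.)

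\textbf{Step 2: from star-shapedness to convexity.} The plan is to first show $\Omega$ is convex. The natural tool is moving planes along the eigendirections of $M$, which after normalization are the coordinate directions. Fix $e=e_j$ and let $u^t$ be the reflection of $u$ across $\{x_j=t\}$; since this reflection fixes $x_{n+1}$, one computes $q^t(x)-q(x)=4\lambda_j t(t-x_j)\ge0$ on $\{x_j<t\}$ for every $t>0$, hence $u^t-u=(u^t+|x_{n+1}|q^t)-(u+|x_{n+1}|q)+|x_{n+1}|(q-q^t)\to0-0+(\le0)$ along $\{x_j<t\}$ at infinity. Applying Lemma~\ref{lem:comp} on half-balls $B_R\cap\{x_j<t\}$ (bounded and even in $x_{n+1}$) and letting $R\to\infty$ yields $u^t\le u$ in $\{x_j<t\}$ for all $t>0$; combined with the symmetric statement for $t<0$ and the evenness of $q$ in $x_j$, this gives that $x'\mapsto u(x',0)$ is monotone non-increasing in each $|x_j|$. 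This makes $\Omega$ a ``lower set'' for the componentwise order on $(|x_1|,\dots,|x_n|)$ — a strong symmetry/monotonicity property, but \emph{not} yet convexity, since such sets may be non-convex. To upgrade, I would study $\partial_{ee}u$ for $e$ in the thin space: it is harmonic off the contact set, vanishes on the relative interior of the contact set, has the asymptotics $\partial_{ee}u(x)=-2(e^TMe)|x_{n+1}|+o(1)$ at infinity, and, on the contact set, inherits a sign for $\partial_{n+1}^+\partial_{ee}u$ from $0\le -\partial_{n+1}^+u\le q(x',0)$ (the bound used in Step~3 of the proof of Theorem~\ref{thm:main1}). A maximum-principle argument for $\partial_{ee}u$ combining these facts is the route I would pursue to force convexity of the superlevel set $\Omega$. \textbf{I expect this to be the main obstacle:} neither piece is routine, and — unlike the quadratic-growth/compact-contact-set case — thin obstacle solutions are only $C^{1,1/2}$ on the thin space, so there is no bounded-Hessian input, and the approximation of the thin obstacle problem by the classical obstacle problem exploited in \cite{EY} is unavailable at cubic growth. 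It is plausible that a new monotonicity formula adapted to the $|x_{n+1}|\times(\text{quadratic})$ blow-down at infinity, or a substitute for that comparison valid at cubic growth, is needed.

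\textbf{Step 3: from convexity to an ellipsoid.} Once $\Omega$ is convex one is in a situation formally dual to the null-quadrature-domain problem: writing $\Delta u=-2\mu\,\mathcal{H}^n\lfloor\{x_{n+1}=0\}$ with $\mu=-\partial_{n+1}^+u\ge0$ supported on $\R^n\setminus\Omega$ and $0\le\mu\le q(x',0)$ there, and similarly $\Delta(|x_{n+1}|q)=2q(x',0)\,\mathcal{H}^n\lfloor\{x_{n+1}=0\}$, the decay $u+|x_{n+1}|q\to0$ becomes a rigid potential-theoretic constraint relating the convex body $\R^n\setminus\Omega$, the fixed quadratic $q$, and the density $\mu$. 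I would then adapt the arguments classifying null quadrature domains with bounded complement as ellipsoids (\cite{EW,ESW23,EFW22}): test the decay condition against harmonic polynomials to get moment identities, use the scaling nestedness $r^{-1}\Omega\subset\Omega$ from Lemma~\ref{lem:nonconvex_starshaped2} to promote analyticity and strict convexity of $\partial\Omega$, and run a dimension-reduction on blow-ups of $\partial\Omega$ — each inheriting the same structure with one fewer parameter — to conclude that $\partial\Omega$ is a quadric, hence (being bounded and convex) an ellipsoid. Undoing the normalization of Step~1 then gives the statement for the original $u$.
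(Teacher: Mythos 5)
The statement you are addressing is posed in the paper as a \emph{conjecture} (Section~\ref{sec:conj}); the paper offers no proof, and only records the weaker fact (Lemma~\ref{lem:nonconvex_starshaped2}) that in the normalized case $q=q^h-1$ the positivity set is star-shaped. Your proposal does not close the conjecture either: Steps~2 and~3 are research programs with the decisive implications left open, and you say so yourself. Concretely, what you actually establish is Step~1 (the reduction to $q=\sum_i\lambda_i x_i^2-\tfrac13(\sum_i\lambda_i)x_{n+1}^2-1$ with $\lambda_i>0$, which is correct up to bookkeeping: with $M$ the Hessian the $x_{n+1}^2$-coefficient is $-\tfrac16\,\mathrm{tr}\,M$, i.e.\ $-\tfrac13\sum_i\lambda_i$ in your normalization) and the moving-plane monotonicity of Step~2, which does look sound: $q^t-q=4\lambda_j t(t-x_j)\ge0$ on $\{x_j<t\}$, the comparison on half-balls via Lemma~\ref{lem:comp} with the $\omega_R$-shift as in Corollary~\ref{cor:comp} goes through, and one gets that $u(\cdot,0)$ is even and nonincreasing in each $|x_j|$. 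That is genuinely more than the paper's star-shapedness, but, as you note, a ``lower set'' for the componentwise order on $(|x_1|,\dots,|x_n|)$ need not be convex, so the conjecture is untouched at this point.

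The two gaps are exactly where the difficulty lies. (a) The proposed upgrade via $\partial_{ee}u$ is not an argument yet: $u$ is only $C^{1,1/2}$ across $\Gamma(u)$, so $\partial_{ee}u$ blows up like $\mathrm{dist}^{-1/2}$ at the free boundary and is not continuous up to the contact set, so there is no obvious function space in which to run a maximum principle; moreover even a sign $\partial_{ee}u\le0$ inside the positivity set would only give concavity along segments contained in $\{u>0,\,x_{n+1}=0\}$, which does not imply convexity of that set (one needs quasiconcavity of $u(\cdot,0)$, i.e.\ convexity of \emph{all} superlevel sets, which is a different and stronger statement). (b) Step~3 assumes convexity and then invokes the null-quadrature-domain classification, but the structural input of \cite{EW,ESW23,EFW22} --- $\Delta u$ being a \emph{constant-density} measure on the coincidence domain --- is absent here: your measure has an unknown variable density $\mu=-\partial_{n+1}^+u$ supported on the complement of the positivity set and constrained only by $0\le\mu\le q(\cdot,0)$, so the moment identities, the claimed analyticity/strict convexity of the boundary ``promoted'' from $r^{-1}\Omega\subset\Omega$, and the dimension reduction are all unproved assertions, and the one known substitute (approximation by the classical obstacle problem, as in \cite{EY}) is, as you say, unavailable at cubic growth. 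A small correction: the degenerate case $\min_{x'}q(x',0)\ge0$ is not ``genuinely harder'' --- then $-|x_{n+1}|q$ is itself a global solution satisfying \eqref{eq:cond_inf_odd}, so by the uniqueness in Theorem~\ref{thm:main1} $u\equiv-|x_{n+1}|q$ and the positivity set is empty; that case is trivial.
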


\section{A nonconvex compact contact set}
\label{sec:nonconvex}

In \cite[Remark 2]{ERW} it is conjectured that, for general growths, not all compact contact sets are convex.  Let us show its validity by means of the following simple example, obtained using Theorem~\ref{thm:ERW}: we immediately have that one can construct solutions with quartic growth where the contact set is not connected (and in particular, not convex).

\begin{lem}
\label{lem:twoballs}
There exists a global solution to \eqref{eq:main_TOP} with polynomial growth \eqref{eq:poly_m} and $m = 4$ whose contact set is compact and nonconvex.
\end{lem}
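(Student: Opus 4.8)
The plan is to invoke Theorem~\ref{thm:ERW} and reduce the problem to finding a single polynomial $p \in \mathcal{P}^e_{n+1}$ whose negative set on the thin space is compact and disconnected (hence nonconvex). Once such a $p$ is found, the theorem produces a global solution $u = u_p = p + v_p$ to \eqref{eq:main_TOP} with polynomial growth of order $\deg p$, and the contact set of $u$ coincides with the contact set of $v_p$, namely $(\{v_p = -p\} \cap \{x_{n+1}=0\})\times\{0\}$. The key observation is that this contact set sits inside $\overline{\{p(x',0) \le 0\}}$: indeed, wherever $p(x',0) > 0$ we have $v_p \ge -p$ forced strictly at the boundary data level, and more precisely the set $\{v_p = -p\}$ is contained in the region where the obstacle $-p$ is nonnegative. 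So if $\{p(x',0) < 0\}$ has two well-separated compact components, the contact set is squeezed into their closure and is itself disconnected.

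First I would write down an explicit even (in $x_{n+1}$) harmonic polynomial. A convenient choice: take a fixed even harmonic polynomial in the variables $(x_1, x_{n+1})$ alone of the form $P(x_1, x_{n+1}) = x_1^4 - 6x_1^2 x_{n+1}^2 + x_{n+1}^4$ (the real part of $(x_1 + i x_{n+1})^4$), whose restriction to $\{x_{n+1}=0\}$ is $x_1^4$, and then perturb. The cleaner route is to start from a one-dimensional quartic in $x_1$ with two negative humps, say $g(x_1) = x_1^4 - a x_1^2 + b$ for suitable $0 < b < a^2/4$, which has $\{g < 0\}$ equal to two disjoint intervals, and then harmonically extend: replace each power $x_1^{2k}$ by the unique even harmonic polynomial in $(x_1, x_{n+1})$ restricting to it, i.e. use $x_1^4 \mapsto \mathrm{Re}(x_1+ix_{n+1})^4$, $x_1^2 \mapsto x_1^2 - x_{n+1}^2$, $1 \mapsto 1$. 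Call the result $p_0(x_1, x_{n+1})$; it is harmonic, even in $x_{n+1}$, and $p_0(x',0) = g(x_1)$, so $\{p_0(x',0) < 0\} = \{g(x_1) < 0\}$. This set is two parallel slabs in $\R^n$, hence \emph{not} compact when $n \ge 2$, so a further correction is needed: add a term like $\delta(|x'|^2 - \ldots)$ or pass to a genuinely $(n+1)$-variable construction so that the negative set becomes two bounded lumps. The honest fix is to take $p = p_0 + \delta\, q_1$ where $q_1$ is an even harmonic polynomial, $q_1(x',0) = |x'|^2 - 2x_1^2$ (harmonic extension of a function growing in the transverse directions), tuned so that for small $\delta > 0$ the set $\{p(x',0) < 0\}$ is a small perturbation of $\{g(x_1) < 0\}$ intersected with a bounded region, yielding two disjoint compact components. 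Checking harmonicity and evenness is immediate; checking that the negative set is compact and has two components is an elementary calculus exercise with the explicit formula.

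With such a $p$ fixed, the remaining work is: (i) verify $p \in \mathcal{P}^e_{n+1}$ (harmonic, even in $x_{n+1}$, $\overline{\{p(x',0)<0\}}$ compact) — done by the explicit construction; (ii) apply Theorem~\ref{thm:ERW} to get $u = p + v_p$ solving \eqref{eq:main_TOP} with $|u - p| \to 0$, hence polynomial growth of order $m = 4$; (iii) show the contact set $\Lambda(u)$ is nonconvex. For (iii) the main point is a lower bound: the two local minima of $g$ are strictly negative, and near each such point $x_1 = \pm x_*$ on the thin space the obstacle $-p$ is strictly positive and smooth, so $v_p$ must touch it somewhere in a neighborhood — one can argue, e.g., via the comparison principle (Lemma~\ref{lem:comp} / Corollary~\ref{cor:comp}) or directly from the fact that $v_p \ge -p > 0$ there while $v_p \to 0$ at infinity and $v_p$ is subharmonic off the contact set, forcing the contact set to meet each hump. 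Since $\Lambda(u) \subset \overline{\{p(x',0)\le 0\}}$ and this ambient set has (at least) two connected components separated by the region $\{g > 0\}$ where $p(x',0) > 0$ (so $v_p > -p$ there, i.e. no contact), $\Lambda(u)$ is disconnected, hence not convex.

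The main obstacle I anticipate is step (iii): ensuring the contact set actually touches \emph{both} humps rather than degenerating (e.g. being empty, or concentrating on only one lump if the construction is asymmetric). A symmetric choice of $p$ (invariant under $x_1 \mapsto -x_1$) removes the asymmetry worry, and nonemptiness on each hump follows because $\min g < 0$ can be made as negative as we like relative to the width of the humps, so a barrier argument forces contact; making this quantitative is the only slightly delicate part, and it is exactly the kind of estimate Proposition~\ref{prop:polysets} is designed to handle in greater generality, so for this lemma an ad hoc version suffices.
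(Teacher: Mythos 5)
Your proposal is correct and follows essentially the same route as the paper: reduce via Theorem~\ref{thm:ERW} to an explicit even harmonic quartic $p$ whose negative set on the thin space is compact with two components (the paper simply takes the harmonic extension of $8|x''|^2+8(x_n^2-1)^2-1$, whose negative set lies in $B_{1/2}(e_n)\cup B_{1/2}(-e_n)$), confine the contact set to that set because $v_p\ge 0$, and use nonemptiness together with the reflection symmetry to conclude it meets both components, hence is disconnected and nonconvex. Note that the symmetry observation is the actual and sufficient fix for your step (iii): the local claim that $v_p$ must touch each hump can fail for asymmetric data, and no quantitative barrier is needed once $p$ is symmetric, since an empty contact set would make the decaying superharmonic $v_p$ vanish identically, contradicting $v_p\ge -p>0$ somewhere on the thin space.
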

\begin{proof}
Let $\bar p\in \mathcal{P}_n$ be defined by 
\[
\bar p(x') = \bar p(x'', x_n) = 8|x''|^2+8(x_n^2-1)^2-1,
\]
and consider $p\in \mathcal{P}_{n+1}$ to be its harmonic extension towards $\R^{n+1}$. Let $v_p$ be the corresponding solution to \eqref{eq:TOP_-p} in Theorem~\ref{thm:ERW} (in particular, $v_p \ge 0$ in $\R^{n+1}$). Then, since $v_p$ is the least supersolution above the obstacle (alternatively, by Lemma~\ref{lem:comp} with $v_p$ and the constant function 1), and $\varnothing\neq \{-p> 0, x_{n+1} = 0\}\subset B_{1/2}(e_n) \cup B_{1/2}(-e_n)$,   we must have (also by symmetry in $x_n$)
\[
\{v_p = -p, x_{n+1} = 0\}\cap B_{1/2}(e_n) \neq \varnothing\quad\text{and}\quad \{v_p = -p, x_{n+1} = 0\}\cap B_{1/2}(-e_n)\neq\varnothing.
\] 
In all, the contact set of $v_p$ is disconnected and bounded, and in particular, it is compact and nonconvex. See Figure~\ref{fig:twoballs}.
\end{proof}

 \begin{figure}
 \includegraphics[scale = 0.6]{./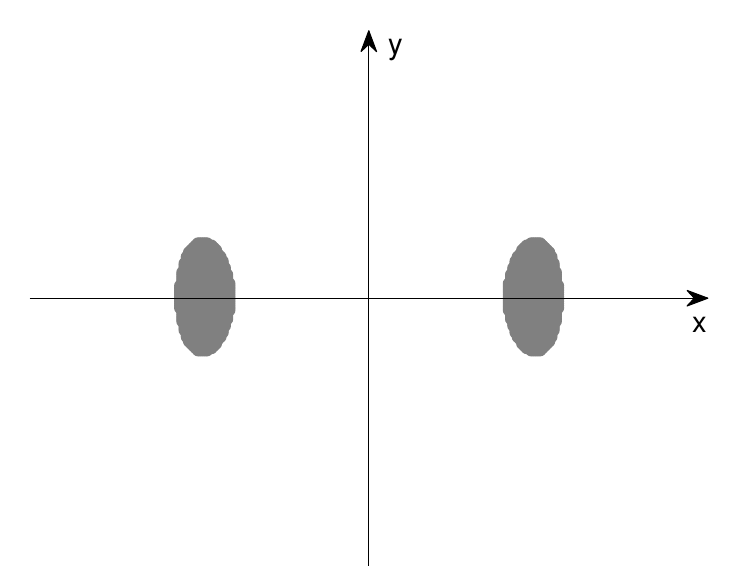}
 \caption{ \label{fig:twoballs} Representation of the thin space in the counter-example constructed in Lemma~\ref{lem:twoballs} for $n = 2$, where the greyed area corresponds to the contact set $\{u = 0\}$.}
 \end{figure}

The previous is a  simple example that exploits the fact that the negativity set of a polynomial can be disconnected when the order is higher or equal than 4. Still, each of the connected components in the previous example is convex.  Let us show that this is not needed: 

\begin{lem}
\label{lem:disk}
There exists a global solution to \eqref{eq:main_TOP} with polynomial growth \eqref{eq:poly_m} and $m = 4$ whose contact set is compact and has a connected component that is nonconvex. 
\end{lem}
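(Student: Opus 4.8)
The plan is to mimic the construction of Lemma~\ref{lem:twoballs}, but replace the obstacle so that its negativity set on the thin space has a connected component shaped like a thin annulus (a ``washer''), which is connected but not convex. Concretely, in $\R^n$ with $n\ge 2$ I would look for a nonnegative radial-in-$x''$ bump centered on a sphere of radius $1$ in a $2$-plane, i.e.\ something like
\[
\bar p(x') = \bar p(x'', x_{n-1}, x_n) = C\big((x_{n-1}^2 + x_n^2 - 1)^2 + |x''|^2\big) - 1,
\]
so that $\{\bar p < 0\}\cap\{x_{n+1}=0\}$ is a small tubular neighborhood of the circle $\{x_{n-1}^2+x_n^2 = 1,\ x''=0\}$, which is connected and clearly nonconvex. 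One then lets $p\in\mathcal P_{n+1}$ be the (even, harmonic) extension of $\bar p$ to $\R^{n+1}$, checks $\bar p\ge -1$ somewhere so that $p\in\mathcal P^e_{n+1}$, and invokes Theorem~\ref{thm:ERW} to get the global solution $u = u_p = p + v_p$ with compact contact set equal to $\{v_p = -p\}\cap\{x_{n+1}=0\}$.

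The next step is to show that the contact set actually contains (a neighborhood of) the full circle, so that it has a connected nonconvex component, rather than degenerating into finitely many disconnected blobs as in Lemma~\ref{lem:twoballs}. For this I would use the comparison principle, Lemma~\ref{lem:comp}, against suitable barriers: on one hand $v_p\ge 0$ globally; on the other hand, near a point $x_\circ$ on the circle the obstacle $-p$ is comparable to $1$, so comparing $v_p$ with an explicit small subsolution (a paraboloid-type supersolution of the thin obstacle problem touching the obstacle, exactly as in Step~3 of the proof of Theorem~\ref{thm:main1}) forces $v_p = -p$ there, i.e.\ $x_\circ$ lies in the contact set. By the rotational symmetry of $\bar p$ in the $(x_{n-1},x_n)$-plane, if one point of the circle is in the contact set then the whole circle is; hence a whole connected nonconvex piece (a topological annulus, thickened) belongs to $\Lambda(u)$. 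Boundedness and compactness of the contact set come for free from Theorem~\ref{thm:ERW} since $\{\bar p<0\}$ is bounded.

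The main obstacle I anticipate is quantitative: one must pick the constant $C$ (and the geometry of the bump) large enough that the contact set is genuinely a \emph{connected} annular region and not, say, the whole disk $\{x_{n-1}^2+x_n^2\le 1,\ x''=0\}$ (which would be convex) nor a union of disconnected arcs. A clean way around this is to observe that it suffices to produce \emph{some} compact contact set with a nonconvex connected component; so I would choose the depth of the well (the constant $1$ subtracted, versus $C$) so that $\{-p>0\}$ is a genuinely thin tube around the circle, use the barrier argument above to show the contact set contains a band around the circle, and simultaneously use the fact that $v_p\ge 0$ forces $\Lambda(u)\subset\{\bar p\le 0\}$ (since on $\{\bar p>0\}=\{-p<0\}$ one has $v_p > -p$), so the contact set is trapped inside the thin tube. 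A thin tube around a circle has every connected component nonconvex, which finishes the proof. A secondary (routine) point is checking that the harmonic extension $p$ of $\bar p$ is still even in $x_{n+1}$ and that $\{\bar p<0\}\ne\varnothing$, both immediate here.
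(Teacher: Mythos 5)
Your proposal is correct and follows essentially the paper's own route: the paper simply takes the radial quartic $\bar p(x') = 4(|x'|^2-1)^2-1$ (your tube around a circle reduces to exactly this annulus when $n=2$), applies Theorem~\ref{thm:ERW}, and uses rotational symmetry together with $v_p\ge 0$ (which traps the contact set inside $\{\bar p\le 0\}$) to conclude that the contact set has an annular, hence nonconvex, connected component. As a minor simplification, your quantitative barrier step is not needed: nonemptiness of the contact set (otherwise $v_p$ would be harmonic and decaying, hence $\equiv 0$, contradicting $v_p\ge -p>0$ somewhere on the thin space) plus the rotational symmetry already places a full circle inside the thin tube, and any connected component containing it is automatically nonconvex.
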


\begin{proof}
In the previous proof, take 
\[
\bar p(x') = 4(|x'|^2-1)^2-1 
\]
instead. By rotational symmetry, the contact set has a connected component that is an annulus around the origin.  See Figure~\ref{fig:disk}.
\end{proof}

 \begin{figure}
 \includegraphics[scale = 0.6]{./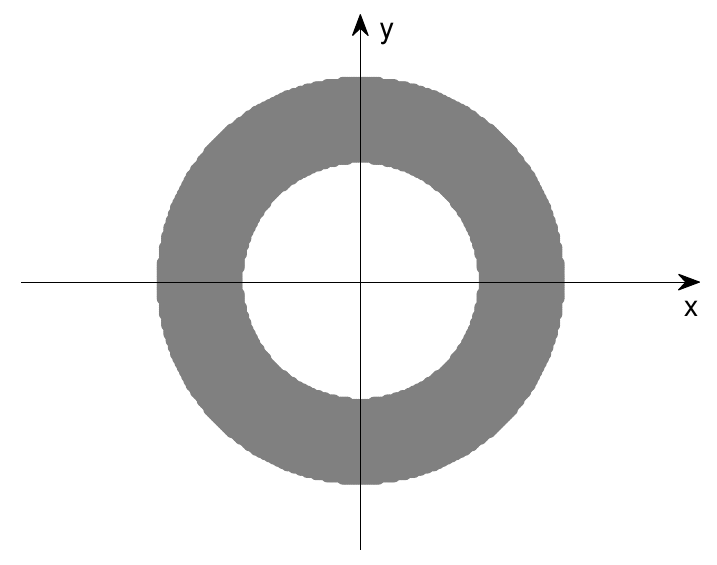}
 \caption{ \label{fig:disk} Representation of the thin space in the counter-example constructed in Lemma~\ref{lem:disk} for $n = 2$, where the greyed area corresponds to the contact set $\{u = 0\}$.}
 \end{figure}

Again, the previous example is not very satisfactory. One could conjecture that nonconvexity is achieved by either taking unions of convex components, or by removing convex sets from already convex contact sets. Let us show, by proving Lemma~\ref{lem:starshaped}, that this is not the case, and we can in fact construct counter-examples that are simply connected. 


Before constructing it, we first state and prove the following independent lemma (cf. Lemma~\ref{lem:nonconvex_starshaped2}):
\begin{lem}
\label{lem:nonconvex_starshaped}
Let $n \ge 2$. Let $p\in \mathcal{P}^e_{n+1}$ be of the form $p = p^h - 1$ for some homogeneous polynomial $p^h$ of order $m\ge 2$. Then, the solution $u$ to \eqref{eq:main_TOP} with $|u(x) - p(x) |\to 0$ as $|x|\to \infty$ has a compact and star-shaped contact set. 
\end{lem}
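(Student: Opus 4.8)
The plan is to mimic the rescaling argument used in Lemma~\ref{lem:nonconvex_starshaped2}, now in the setting of Theorem~\ref{thm:ERW} rather than Theorem~\ref{thm:main1}. Since $p = p^h - 1$ with $p^h$ homogeneous of degree $m$, I would set $u_r(x) := r^{-m} u(rx)$ for $r \ge 1$. One checks directly that $u_r$ is again a global solution to \eqref{eq:main_TOP}: the defining conditions in \eqref{eq:main_TOP} are scale-invariant in this normalization (harmonicity, the sign conditions, and the even symmetry in $x_{n+1}$ are all preserved, and the contact set of $u_r$ is $\tfrac1r$ times that of $u$). Writing $p_r(x) := r^{-m} p(rx) = p^h(x) - r^{-m}$, we have $|u_r(x) - p_r(x)| = r^{-m}|u(rx) - p(rx)| \to 0$ as $|x| \to \infty$, so $u_r$ is the unique global solution associated to the polynomial $p_r \in \mathcal{P}^e_{n+1}$ via Theorem~\ref{thm:ERW} (note $p_r$ indeed lies in $\mathcal{P}^e_{n+1}$: it is harmonic, even in $x_{n+1}$, and $\{p_r(x',0)<0\} = \tfrac1r\{p(x',0)<0\}$ is bounded).

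The key comparison is that $p_r \ge p$ pointwise for $r \ge 1$, since $p^h(x) - r^{-m} \ge p^h(x) - 1 = p(x)$. I would then invoke the monotonicity of the correspondence $p \mapsto u_p$ in Theorem~\ref{thm:ERW}: writing $u_p = p + v_p$ where $v_p$ solves the thin obstacle problem with obstacle $-p$ and decays at infinity, a larger obstacle $-p_r \le -p$ yields a smaller solution $v_{p_r} \le v_p$ — this follows from Corollary~\ref{cor:comp} applied to $v_{p_r}$ and $v_p + (p - p_r) = v_p + (1 - r^{-m})$, which is a supersolution for the obstacle problem with obstacle $-p_r$ (adding a nonnegative constant preserves being a supersolution above an obstacle, and the two functions have the same limit behavior at infinity up to the constant $1 - r^{-m} \ge 0$). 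Actually it is cleaner to compare $u_r$ and $u$ directly: both are global solutions to \eqref{eq:main_TOP}, and since $u_r - p_r \to 0$, $u - p \to 0$ at infinity with $p_r \ge p$, we get $\liminf_{|x|\to\infty}(u_r - u)(x) \ge \liminf_{|x|\to\infty}(p_r - p)(x) = 1 - r^{-m} \ge 0$, hence $u_r \ge u$ everywhere by Corollary~\ref{cor:comp}. Wait — I must be careful that Corollary~\ref{cor:comp} requires the same obstacle $\varphi = 0$, which is indeed the case here for \eqref{eq:main_TOP}.

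From $u_r \ge u \ge 0$ on $\{x_{n+1}=0\}$ we conclude $\Lambda(u) = \{u = 0, x_{n+1}=0\} \subset \{u_r = 0, x_{n+1}=0\} = \Lambda(u_r) = \tfrac1r \Lambda(u)$, i.e. $r\,\Lambda(u) \supset \Lambda(u)$ for all $r \ge 1$; equivalently $\Lambda(u)$ is star-shaped with respect to the origin. Compactness of $\Lambda(u)$ is immediate from Theorem~\ref{thm:ERW} since $p \in \mathcal{P}^e_{n+1}$. The main (only real) obstacle is verifying carefully that the scaling $u_r$ lands in the hypotheses of Corollary~\ref{cor:comp} / Theorem~\ref{thm:ERW} with the stated polynomial $p_r$, and that one may apply the comparison despite the solutions being defined on all of $\R^{n+1}$ with merely polynomial growth — but this is exactly what Corollary~\ref{cor:comp} is designed for, since the difference $u_r - u$ has a nonnegative liminf at infinity. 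Everything else is a one-line rescaling computation, parallel to the proof of Lemma~\ref{lem:nonconvex_starshaped2}.
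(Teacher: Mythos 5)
Your proposal is essentially the paper's own argument in mirror image: the paper rescales with $r\le 1$, so that $p_r\le p$, $u_r\le u$, and $\Lambda(u)\subset \tfrac1r\Lambda(u)$; you rescale with $r\ge 1$, so that $p_r\ge p$ and $u_r\ge u$, which you justify via Corollary~\ref{cor:comp} (in fact more explicitly than the paper does), and you reach the same conclusion. The scaling computation, the identification $\Lambda(u_r)=\tfrac1r\Lambda(u)$, the behavior at infinity, and the appeal to Theorem~\ref{thm:ERW} for compactness are all correct.

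One local slip in the final chain: from $u_r\ge u\ge 0$ on the thin space you may only conclude $\Lambda(u_r)\subset\Lambda(u)$ (a zero of the larger nonnegative function is a zero of the smaller), not $\Lambda(u)\subset\Lambda(u_r)$ as written; and your subsequent ``i.e.'' reverses the inclusion a second time, so the two errors cancel and the final statement --- $\tfrac1r\Lambda(u)\subset\Lambda(u)$ for all $r\ge 1$, i.e.\ star-shapedness at the origin --- is the correct one and does follow from your comparison. The corrected chain should read $\tfrac1r\Lambda(u)=\Lambda(u_r)\subset\Lambda(u)$ for all $r\ge 1$. With that one line fixed, the proof is complete and coincides with the paper's.
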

\begin{proof}
Let $u_r := r^{-m} u(r\, \cdot\,)$ for some $r \le 1$. Then $u_r$ is a global solution to \eqref{eq:main_TOP} such that $| u_r - p_r|\to 0$ as $|x|\to \infty$, where $p_r(x) = r^{-m}p(rx) = p^h(x) - r^{-m}\le p(x)$, since $r\le 1$. In particular, $u_r\le u$ and we have 
\[
\{u_r = 0, x_{n+1} = 0\} = \frac{1}{r}\{u = 0, x_{n+1} = 0\} = \frac{1}{r}\Lambda(u) \supset \Lambda(u)\qquad\text{for all}\quad r \le 1. 
\]
That is, $\Lambda(u)$ is star-shaped at the origin. 
\end{proof}

Lemma~\ref{lem:starshaped} follows, in fact, from the much more general statement in Proposition~\ref{prop:polysets}, which says that compact contact sets of global solutions can be arbitrarily close to zero level sets of polynomials, and we prove next, after the following preliminary lemma:

\begin{lem}
\label{lem:globk}
Let $n \ge 2$. Let $v_k$ be the global solution to \eqref{eq:TOP_-p} with $p(x) = p_k(x', x_{n+1}) =  |x'|^k-1$. Then, 
\begin{equation}
\label{eq:Bcontained}
B_{1-c_{n, k}}\cap \{x_{n+1} = 0 \}\subset \{v_k = -p_k, x_{n+1} = 0\}\subset B_{1}\cap \{x_{n+1} = 0 \},
\end{equation}
for some constant $c_{n, k}$ depending only on $n$ and $k$ such that $c_{n, k}\downarrow 0$ as $k \to \infty$. 
\end{lem}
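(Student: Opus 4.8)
The plan is to prove the two inclusions in \eqref{eq:Bcontained} separately, exploiting the rotational symmetry of $p_k(x) = |x'|^k - 1$ and the comparison principle, Lemma~\ref{lem:comp} (or rather its global version, Corollary~\ref{cor:comp}). The outer inclusion $\{v_k = -p_k, x_{n+1}=0\} \subset B_1 \cap \{x_{n+1}=0\}$ is the easy half: on $\{x_{n+1}=0\}$ one has $-p_k(x',0) = 1 - |x'|^k < 0$ whenever $|x'| > 1$, so the obstacle $-p_k$ is strictly negative there; since $v_k \ge 0 > -p_k$ on that set, no contact can occur outside $B_1$. (Here I use that $v_k \ge 0$, which holds because $v_k$ is the least nonnegative supersolution above $-p_k$, and $0$ is admissible since $-p_k \le 0$ would fail — more carefully, one compares $v_k$ with the constant $0$ where appropriate, or simply notes $u_{p_k} = p_k + v_k$ solves \eqref{eq:main_TOP} so $u_{p_k}\ge 0$ on the thin space forces $v_k \ge -p_k$, and the free-boundary condition gives $v_k>-p_k$ where $-p_k<0$; I will state this cleanly using Theorem~\ref{thm:ERW}.)

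The inner inclusion is the substantive one. The idea is to build, for a suitable $r = 1 - c_{n,k} < 1$, an explicit subsolution (in the thin-obstacle sense) $w$ that touches the obstacle $-p_k$ on all of $B_r \cap \{x_{n+1}=0\}$, lies below $v_k$, and therefore forces $v_k = -p_k$ there. Concretely, on the thin space I want a function of the form $w(x',0) = -p_k(x',0) = 1 - |x'|^k$ for $|x'|\le r$, glued to something that decays and stays below $v_k$ outside. The natural candidate is to extend $-p_k$ restricted to $\overline{B_r'}$ to a global thin-obstacle solution $w$ with obstacle $\varphi_r := -p_k$ truncated so that $\varphi_r \equiv -p_k$ on $B_r'$ and $\varphi_r$ is strictly negative (say $\le -\delta$) outside, then appeal to the decay/comparison machinery: since $\varphi_r \le -p_k$ everywhere (because $1 - |x'|^k$ is decreasing in $|x'|$, so past $r$ the true obstacle is even more negative — wait, that inequality must be checked and may go the other way, so I may instead take $w$ itself to be the solution with obstacle $-p_k$ on all of $\R^n$ and observe directly), we get $w \le v_k$ by Corollary~\ref{cor:comp}, and on $B_r'$ both equal $-p_k$ only if the contact of $w$ covers $B_r'$.

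So the real content is: \textbf{show that the global thin-obstacle solution $v_k$ with obstacle $|x'|^k - 1$ contacts the obstacle on a ball of radius $1 - c_{n,k}$ with $c_{n,k}\to 0$.} For this I would use a barrier argument at the level of the one-sided normal derivative: on the contact set one needs $\partial^+_{n+1} v_k \ge 0$ (Hopf-type), and off the contact set the harmonic extension of the obstacle data controls things. A clean route: consider $h_k$, the harmonic function in $\R^{n+1}\setminus(\{x_{n+1}=0\}\setminus B_1')$ that is even in $x_{n+1}$, equals $1 - |x'|^k$ on $B_1'\times\{0\}$, has zero Neumann data on $(\R^n\setminus B_1')\times\{0\}$, and decays at infinity; by symmetry and the maximum principle, $v_k \ge h_k$-type comparisons pin the contact set, and the gap $c_{n,k}$ comes from estimating where $\partial^+_{n+1}(\text{candidate}) $ changes sign, which is a one-dimensional radial computation whose error is $O(1/k)$ because $|x'|^k$ becomes a sharp indicator of $\{|x'|>1\}$ as $k\to\infty$. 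The main obstacle I anticipate is making the constant $c_{n,k}$ explicit and checking $c_{n,k}\downarrow 0$: this requires a quantitative barrier, e.g. comparing $v_k$ near $\partial B_1'$ with the solution of the thin obstacle problem for the flat obstacle $\min(0, \text{linearization})$ or with an explicitly known homogeneous profile, and tracking how the superquadratic obstacle forces contact arbitrarily close to radius $1$. I would handle this by a compactness/normalization argument: rescale near a boundary point of the (rotationally symmetric) contact set, pass $k\to\infty$, and show the limit obstacle is the half-space indicator-type obstacle whose contact set is exactly $\{|x'|\le 1\}$, so the deficit $c_{n,k}$ must vanish; combined with the monotone rotational structure this yields \eqref{eq:Bcontained}.
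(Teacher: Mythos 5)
Your outer inclusion is fine and is essentially the paper's: $v_k$ is superharmonic and tends to $0$ at infinity, so $v_k\ge 0$, while $-p_k<0$ on $\{|x'|>1,\ x_{n+1}=0\}$, so no contact occurs there. The gap is the inner inclusion, which is the entire content of the lemma and which your proposal does not actually prove. You sketch three partial strategies: (a) gluing a ``subsolution'' equal to the obstacle on $B_r'$, where you yourself note the comparison may go the wrong way and then abandon it; (b) a mixed Dirichlet--Neumann harmonic function $h_k$, for which the two facts you would need, namely $h_k\ge -p_k$ off $B_1'$ and, much more seriously, $\partial_{n+1}^+ h_k\le 0$ on a ball of radius $1-c_{n,k}$ with $c_{n,k}\to 0$, are exactly the quantitative statements at stake and are nowhere verified (the sign of $\partial_{n+1}^+ h_k$ near $\partial B_1'$ is precisely where the contact set detaches from $B_1$); and (c) a compactness argument as $k\to\infty$. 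None is carried out, and (c) as written misses the key subtlety: uniform convergence of $v_k$ to the limit solution (which equals $1$ on $B_1'\times\{0\}$) does not by itself force the contact sets of $v_k$ to fill up $B_1'$, since $v_k$ may hover strictly above the obstacle while converging to it. To close this you would need either a quantitative barrier for each fixed $k$, or $C^1_{\rm loc}$ convergence on compact subsets of $B_1'\times\{0\}$ (available because $\bigl\| \, |x'|^k \bigr\|_{C^2(K')}\to 0$ for $K'\Subset B_1'$) combined with Hopf's lemma for the limit, giving $\partial_{n+1}^+ v_k<0$, hence contact, on every $B_\rho'$ with $\rho<1$ for $k$ large; this is in effect the mechanism the paper uses later in Proposition~\ref{prop:polysubsets}, and even then you would still have to define $c_{n,k}$ (e.g.\ via the largest centered ball inside the contact set, using rotational symmetry and star-shapedness) and check it tends to $0$.

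The paper's own proof is different and fully explicit: it slides the supersolutions $\Gamma_{A,t}(x)=A|x|^{1-n}+t$ down onto $v_k$; at the critical value $t_*>0$ the touching point must belong to the contact set (strong maximum principle), and it coincides with the tangency point of $\Gamma_{A,t_*}$ with the radial obstacle $1-r^k$. Hence every radius $\rho$ at which some $\Gamma_{A,t}$, $t\ge 0$, is tangent from above to $1-r^k$ lies in the contact set, and a one-variable computation (minimizing $g(r)=Ar^{1-n}+t-1+r^k$) shows these radii are exactly $\rho\le\bar\rho(k,n)$ with an explicit $\bar\rho(k,n)\uparrow 1$, so $c_{n,k}=1-\bar\rho(k,n)$. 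If you want to keep your route, strategy (c) upgraded with the $C^1$-convergence plus Hopf argument is salvageable, but as it stands the quantitative heart of the lemma is missing.
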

\begin{proof}
The second inclusion in \eqref{eq:Bcontained} is clear, since $-p_k \le 0$ in $B_1^c\cap \{x_{n+1} = 0\}$. On the other hand, the first inclusion always holds for some $c_{n, k}> 0$, since the solution has rotational symmetry in the thin space and the contact set is star-shaped by Lemma~\ref{lem:nonconvex_starshaped}. Let us quantify the value of $c_{n, k}$. In order to do it, we will bound the solution $v_k$ from above by appropriate barriers. 

Let us define, for $A\ge 0$ and $t\ge 0$, 
\[
\Gamma_{A, t}(x) := A|x|^{1-n} + t, 
\]
which is harmonic outside of the origin, and nonnegative at infinity for $t\ge 0$. Then, $\Gamma_{A, t}\ge v_k$ for $t\ge 1$, and we can slide it from above by decreasing $t$ until we touch the solution $v_k$:
\[
t_* = t_*(A, k) := \min\{t > 0 : \quad \Gamma_{A, t} \ge v_k\quad\text{in}\quad \R^{n+1}\}. 
\]

Then, if $t_* > 0$, $\Gamma_{A, t_*}$ and $v_k$ are tangent at a point $x_*$ (with  $x_*= 0$ if and only if $A= 0$). By the strong maximum principle, $v_k$ cannot be harmonic around $x_*$, and  $x_*\in \{v_k = -p_k, x_{n+1} = 0\}$, that is, $x_*$ belongs to the contact set. Hence, we have that the set 
\[
S_k := \left\{x\in \R^n\times \{0\} : \begin{array}{l}\Gamma_{A, t} \ge -p_k~~\text{on}~~\{x_{n+1} = 0\}~~\text{and}~~\Gamma_{A, t}(x) = -p_k(x)\\
\text{for some}~~A\ge 0, ~t > 0.
\end{array} \right\}
\]
is contained in the contact set, 
\begin{equation}
\label{eq:Sk_contained}
S_k \subset \{v_k = -p_k, x_{n+1} = 0\}. 
\end{equation}
We immediately have that $0\in S_k$, taking $A = 0$ and $t = 1$. Moreover, we also have that $x_*\in \{x_{n+1}=0\}$ with $|x_*| = \rho>0$ is such that $x_*\in S_k$ if and only if 
\[
g(r) := Ar^{1-n}+t-1+r^k \ge 0\quad\textrm{for}\quad r > 0,\quad\text{and}\quad g(\rho) = 0,
\]
for some $A > 0$ and $t \ge 0$. The minimum of $g$, $\bar r$, is achieved when $g'(\bar r) = 0$, that is, 
\[
\bar r = \left(\frac{(n-1)A}{k}\right)^{\frac{1}{n+k-1}}.
\]
Then, we can take $\rho = \bar r$ if and only if $g(\bar r) = 0$ for some $A>0$ and $t\ge 0$. Given $A$, $t$ is immediately fixed by $g(\bar r) = 0$, and we get the condition
\[
t = 1-\bar r^k-A\bar r^{1-n} = 1- \left(\kappa^{-\frac{\kappa}{\kappa+1}}+  \kappa^{\frac{1}{\kappa+1}}\right)A^{\frac{\kappa}{\kappa + 1}}\ge 0,
\]
where $\kappa = \frac{k}{n-1}$. This gives a condition on $A$, which can be equivalently written in terms of $\bar r$ as 
\begin{equation}
\label{eq:rhonk_def}
\bar r \le \bar\rho(k, n):=  \kappa^{-\frac{1}{(n-1)(\kappa+1)}}\left(\kappa^{-\frac{\kappa}{\kappa+1}}+  \kappa^{\frac{1}{\kappa+1}}\right)^{-\frac{1}{(n-1)\kappa}} < 1,\qquad\kappa = \frac{k}{n-1}. 
\end{equation}
Thus, we have that $x_*\in S_k$ if and only if $|x_*|\le \bar \rho(k, n)$, and thanks to \eqref{eq:Sk_contained} we get that \eqref{eq:Bcontained} holds with $c_{n, k} := 1-\bar\rho(n, k)$. Finally, from \eqref{eq:rhonk_def} we get $c_{n, k}\downarrow 0$ as $k\to \infty$. 
\end{proof}

Thanks to the previous result, we get:  

\begin{proof}[Proof of Proposition~\ref{prop:polysets}]
We will construct solutions $v_p$ to \eqref{eq:TOP_-p} for some $p\in \mathcal{P}^e_{n+1}$, so that our desired solution is $u_p = v_p + p$. 

Let $v_k$ be the global solution to \eqref{eq:TOP_-p} constructed in Lemma~\ref{lem:globk}, and let $\bar \rho(n, k) = 1-c_{n, k}$. Let $w_k$ be the rescaling $w_k(x) = v_k(x/\bar\rho(k, n))$,  that is, the global solution to \eqref{eq:TOP_-p} with $p =  p_k(\cdot /\bar\rho(n, k))$, where without loss of generality we take $p_k$ to be the harmonic extension of $|x'|^k-1$ towards $\R^{n+1}$. By Lemma~\ref{lem:globk} rescaled, and taking $k$ even (so that $p$ is polynomial), we get 
\begin{equation}
\label{eq:togstep2}
B_{1}\cap \{x_{n+1} = 0 \}\subset \{w_k = -p_k(\cdot /\bar\rho(k, n)), x_{n+1} = 0\}\subset B_{1/\bar \rho(n, k)}\cap \{x_{n+1} = 0 \}.
\end{equation}
We claim that our desired solution is $u_\beta := v_{s_\beta}+s_\beta$, where $v_{s_\beta}$ is the solution to \eqref{eq:TOP_-p} with $p = s_\beta$ and 
\[
s_\beta(x) := p_k(x/\bar \rho(n, k))+\beta f_{\rm ext}(x),
\]
for some $\beta> 0$ large enough, where we have denoted by $f_{\rm ext}\in \mathcal{P}_{n+1}$ the (even) harmonic extension of $f$ towards $\R^{n+1}$. Indeed, since $s_\beta \ge p_k(\cdot/\bar\rho(n, k))$, then $w_k$ is a supersolution for the problem with obstacle $s_\beta$, and $v_{s_\beta} = -s_\beta$ in $B_1\cap \{x_{n+1} = 0\}\cap \{f_{\rm ext}(x) = 0\}$. That is, 
\[
(\{  f = 0\}\times \{0\})\cap B_1\subset \{u_\beta = 0\}\cap \{x_{n+1} = 0\}. 
\]
On the other hand, for any $\eps  >0$ there exists some $\beta > 0$ large enough so that 
\[
s_\beta(x) > 0\quad\text{in}\quad \{(x', 0) \in \R^{n+1} : {\rm dist}(x', \{f = 0\}) > \eps\}
\]
(since $ f \ge 0$), and together with  \eqref{eq:togstep2} gives, for this $\beta$,
\[
\{u_\beta = 0\}\cap \{x_{n+1} = 0\} \subset \{ (x', 0) \in \R^{n+1} : {\rm dist}(x', \{f = 0\})\le \eps,  |x'|\le 1/\bar\rho(n, k)\}.
\]
By choosing $k = 2$ we get the first result, with $R = \frac{1}{\bar\rho(n, 2)}$, which is explicit, \eqref{eq:rhonk_def}. Notice, also, that to get Remark~\ref{rem:mhom} we can take $k = m$ (which is even, since $f \ge 0$), in which case the constant $R$ also depends on $m$. 
 
 Finally, in order to get the second part of the statement, we simply observe that taking $k$ large enough we can make sure that $ \frac{1}{\bar\rho(n, k)}\le 1+\eps$. 
\end{proof}

 \begin{figure}
 \includegraphics[scale = 0.6]{./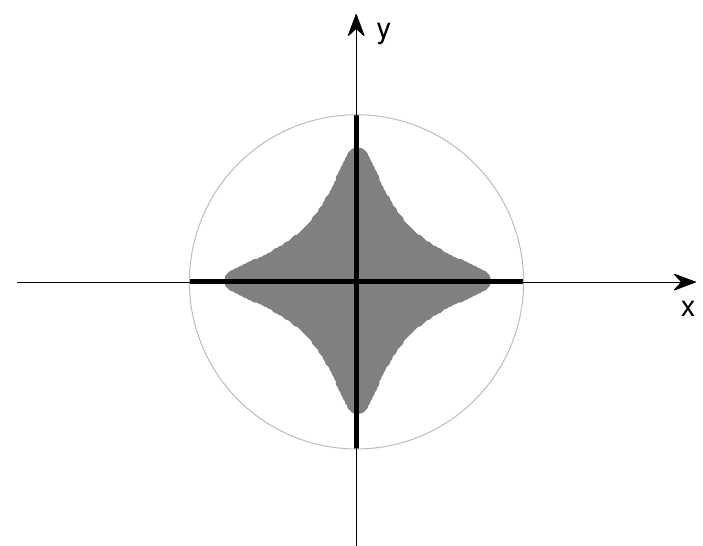}
 \caption{ \label{fig:starshaped} Representation of the thin space in the counter-example constructed in the proof Lemma~\ref{lem:starshaped} for $n = 2$, where the greyed area corresponds to the contact set $\{u = 0\}$, and the blacked lines correspond to the zero level set of $q(x, y)$ in a ball. (This is, in fact, a numerical representation in MatLab of the corresponding setting.) }
 \end{figure}
 
We can now construct the example from Lemma~\ref{lem:starshaped}, which we represent in Figure~\ref{fig:starshaped}:
\begin{proof}[Proof of Lemma~\ref{lem:starshaped}]
Take $f(x, y) = x^2y^2$ for $n = 2$ in Proposition~\ref{prop:polysets}, and we are done thanks to Remark~\ref{rem:mhom} and Lemma~\ref{lem:starshaped}. 
\end{proof}

\section{Approximating any compact set}
\label{sec:6}

We start by proving the following  abstract result on the approximation of level sets by continuous functions:

\begin{lem}
\label{lem:abs}
Let $n \ge 1$. Let $g\in C(\overline{B_1})$ with $g > 0$ on $\partial B_1$. Then, for any $\eps > 0$ there exists $\bar \eta > 0$ such that 
\[
\dH(\{g \le 0\}, \{g \le \eta_*\}) \le \eps,\qquad\text{for all}\quad \eta_*\in (0, \bar \eta],
\]
and that for any $A\subset B_1$ such that $\{g <\eta\}\subset A \subset \{g \le \bar \eta\}$ for some $\eta > 0$ we have 
\[
\dH(\partial \{g \le 0\}, \partial A) \le \eps. 
\]
\end{lem}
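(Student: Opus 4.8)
The plan is to extract the required $\bar\eta$ from a compactness/uniform-continuity argument on the compact set $\overline{B_1}$, and then read off both conclusions from the inclusions relating $A$ to sublevel sets of $g$.

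First I would record the basic facts. Since $g>0$ on the compact set $\partial B_1$, there is $\delta_0>0$ with $g\ge 2\delta_0$ on $\partial B_1$, and by continuity a neighborhood $\{x\in\overline{B_1}:\operatorname{dist}(x,\partial B_1)<r_0\}$ on which $g\ge\delta_0$; hence for every $\eta\le\delta_0$ the sublevel set $\{g\le\eta\}$ is contained in the compact set $K_0:=\{x\in\overline{B_1}:\operatorname{dist}(x,\partial B_1)\ge r_0\}\subset B_1$, and in particular $\{g\le0\}$ and every $A$ as in the statement avoid a fixed neighborhood of $\partial B_1$. The inclusion $\{g\le0\}\subset\{g\le\eta_*\}$ is trivial, so controlling $\dH(\{g\le0\},\{g\le\eta_*\})$ reduces to showing every point of $\{g\le\eta_*\}$ is within $\eps$ of $\{g\le0\}$.

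Next, the key step. Consider the function $\eta\mapsto N_\eps(\eta):=\sup_{x\in\{g\le\eta\}}\operatorname{dist}(x,\{g\le0\})$ for $\eta>0$ (with the convention that the sup over the empty set is $0$). I claim $N_\eps(\eta)\to0$ as $\eta\downarrow0$. If not, there are $\eta_k\downarrow0$ and $x_k\in\{g\le\eta_k\}$ with $\operatorname{dist}(x_k,\{g\le0\})\ge c>0$; passing to a subsequence $x_k\to x_\infty\in\overline{B_1}$ (in fact $x_\infty\in K_0$ by the first paragraph), continuity of $g$ gives $g(x_\infty)\le\limsup_k g(x_k)\le\limsup_k\eta_k=0$, so $x_\infty\in\{g\le0\}$, contradicting $\operatorname{dist}(x_\infty,\{g\le0\})\ge c$. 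Hence we may pick $\bar\eta\in(0,\delta_0]$ with $N_\eps(\bar\eta)\le\eps$; since $N_\eps$ is monotone, $N_\eps(\eta_*)\le\eps$ for all $\eta_*\in(0,\bar\eta]$, which is exactly the first conclusion.

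For the second conclusion, fix $A$ with $\{g<\eta\}\subset A\subset\{g\le\bar\eta\}$ for some $\eta>0$. Both $\partial\{g\le0\}$ and $\partial A$ lie inside $\overline{\{g\le\bar\eta\}}\subset K_0$, again by the first paragraph, so neither boundary sees $\partial B_1$ and Hausdorff distance is computed entirely in $B_1$. To bound $\operatorname{dist}(a,\partial\{g\le0\})$ for $a\in\partial A$: since $a\in\overline A\subset\{g\le\bar\eta\}$, the first conclusion (with $\eta_*=\bar\eta$) gives a point $b\in\{g\le0\}$ with $|a-b|\le\eps$; shrinking if necessary one finds a point of $\partial\{g\le0\}$ within $\eps$ of $a$ — concretely, either $a\in\{g\le0\}$ already, or the segment $[a,b]$ crosses $\partial\{g\le0\}$. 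Conversely, for $y\in\partial\{g\le0\}$ one has $g(y)=0<\eta$ so $y\in\{g<\eta\}\subset A$, while arbitrarily close to $y$ there are points with $g>0$; one checks such nearby points with $g$ slightly positive can be taken outside $A$ once $\bar\eta$ is small, using the first conclusion once more to guarantee that $\{g\le\bar\eta\}$ stays within $\eps$ of $\{g\le0\}$, so points just outside $\{g\le\eta\}$-type sets are near $y$ and near the complement of $A$; hence $y$ is within $\eps$ of $\partial A$. Combining the two directions yields $\dH(\partial\{g\le0\},\partial A)\le\eps$.

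The main obstacle I anticipate is the second conclusion, specifically producing points of $\partial A$ near $\partial\{g\le0\}$ and vice versa for an \emph{arbitrary} set $A$ sandwiched between $\{g<\eta\}$ and $\{g\le\bar\eta\}$: $A$ need not be open, closed, or even measurable in any nice way, so the argument must rely only on the topological sandwich. The clean way is to note $\partial A\subset\overline A\setminus\mathrm{int}(A)\subset\{g\le\bar\eta\}\setminus\{g<\eta\}=\{\eta\le g\le\bar\eta\}$, and $\partial\{g\le0\}\subset\{g\le0\}$, so both boundaries live in the "thin shell" $\{0\le g\le\bar\eta\}$; then the first conclusion, which already bounds the Hausdorff distance of any two sublevel sets in this shell from $\{g\le0\}$, forces both boundaries to be within $\eps$ of $\{g\le0\}$ and — after a short symmetric argument using that $\{g<\eta\}\subset A$ and that $\{g\le0\}\subset\{g<\eta\}$ up to the shell — within $\eps$ of each other. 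I would streamline the writeup by proving the single estimate $\dH(S_1,S_2)\le\eps$ whenever $\{g\le0\}\subset S_i\subset\{g\le\bar\eta\}$, and applying it to $S_i=\{g\le0\}$'s closure-type neighborhoods and to $A$ directly.
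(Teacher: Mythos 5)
Your choice of $\bar\eta$ for the first conclusion (via the quantity $N_\eps(\eta)=\sup_{\{g\le\eta\}}\operatorname{dist}(\cdot,\{g\le 0\})$ and a compactness argument) is correct and essentially equivalent to the paper's explicit choice \eqref{eq:eta1}, and your first direction of the boundary estimate is also fine: since $\partial A\subset\{\eta\le g\le\bar\eta\}$, any $a\in\partial A$ lies outside the closed set $\{g\le 0\}$, so its distance to $\{g\le 0\}$ equals its distance to $\partial\{g\le 0\}$ and is at most $\eps$ by the first part. (Your hedge ``either $a\in\{g\le0\}$ already'' would not by itself give closeness to $\partial\{g\le0\}$, but as you note later this case is vacuous.)

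The genuine gap is in the converse direction, which you yourself flag as the delicate point but then do not actually close. To find a point of $\partial A$ within $\eps$ of a given $y\in\partial\{g\le 0\}$, it is not enough that ``arbitrarily close to $y$ there are points with $g>0$'': such points may satisfy $0<g\le\bar\eta$ and hence may all belong to $A$ (take $A=\{g\le\bar\eta\}$), in which case a whole ball $B_\eps(y)$ could lie in the interior of $A$ and miss $\partial A$ entirely. What is needed is a point $\bar y\in B_\eps(y)$ with $g(\bar y)>\bar\eta$, hence $\bar y\notin A$, so that the segment from $y\in\{g<\eta\}\subset A$ to $\bar y$ crosses $\partial A$. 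This requires a \emph{uniform} statement over $\partial\{g\le 0\}$ obtained by a second compactness argument and a further shrinking of $\bar\eta$ depending on $\eps$ and $g$ --- precisely the paper's claim \eqref{eq:propeta} (``for all $\bar x\in\partial\{g\le0\}$ there is $\bar y\in B_\eps(\bar x)$ with $g(\bar y)\ge 2\bar\eta$''), proved by a sequence/contradiction argument. Your proposed justification, ``using the first conclusion once more,'' cannot supply this: Hausdorff closeness of $\{g\le\bar\eta\}$ to $\{g\le0\}$ says nothing about $g$ exceeding $\bar\eta$ near each boundary point. Likewise, your streamlined reduction to the estimate $\dH(S_1,S_2)\le\eps$ for sandwiched sets $\{g\le0\}\subset S_i\subset\{g\le\bar\eta\}$ does not yield the boundary statement, since Hausdorff closeness of sets does not control Hausdorff distance of their boundaries; the sandwich hypothesis plus \eqref{eq:propeta} and the segment argument is exactly what replaces it. With that claim added (and $\bar\eta$ shrunk accordingly), your argument matches the paper's proof.
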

\begin{proof}
For the first part, we can  take, for example, 
\begin{equation}
\label{eq:eta1}
\bar \eta = \inf_{\{x'\in B_1: {\rm dist}(x', \{g\le 0\}) > \eps\}} g(x') > 0. 
\end{equation}
For the second part, observe that on the one hand, for any $x\in \partial A \subset \{\eta \le g \le \bar \eta\}$, by the first part we have 
\[
{\rm dist}(x, \partial\{g \le 0\}) = {\rm dist}(x, \{g \le 0\}) \le \dH(\{g\le \eta_*\}, \{g\le 0\}) \le \eps. 
\]

We now claim that, up to making $\bar \eta$ smaller, we can also assume that 
\begin{equation}
\label{eq:propeta}
\text{for all}\quad \bar x\in \partial\{g\le 0\}, \quad\text{ there exists }\quad \bar y \in B _\eps(\bar x )\quad\text{such that}\quad g(\bar y )\ge 2\bar\eta. 
\end{equation}
Indeed, otherwise we would have a sequence $\bar x_k\in \partial\{g\le 0\}\to \bar x_\infty\in \partial\{g \le 0\}$ (since $\partial\{g \le 0\}$ is compact)  such that $g(y )\le 0$ for all $y \in B_\eps (\bar x_\infty)$, hence $\bar x_\infty\notin\partial\{g \le 0\}$, a contradiction. 

Let  $x\in \partial\{g \le 0\}$, and let $y\in B_\eps(x)$ such that $g(y) \ge 2\bar \eta$ given by \eqref{eq:propeta}. Let $x_t := (1-t) x + t y$ for $t\in[0, 1]$. By assumption, $x_0\in A$ and $x_1\notin A$. Hence, there exists $\tau\in [0, 1]$ such that $x_\tau \in \partial A$. Since $|x_\tau - x|< \eps$, we have ${\rm dist}(x, \partial A) \le \eps$. In all, we have $\dH(\partial A, \partial\{g\le 0 \}) \le \eps$, as we wanted. 
\end{proof}

 We show next that any subzero level set of a polynomial can be approximated by global solutions with polynomial growth:

\begin{prop}
\label{prop:polysubsets}
Let $n \ge 2$. Let $f\in \mathcal{P}_n$ with $\{f < 0\}$ compact (or bounded below, for $f$ nonconstant). Then, for any $\delta > 0$ there exists a solution $u = u_\delta$ to \eqref{eq:main_TOP} with polynomial growth such that 
\[
\{f \le - \delta\}\times \{ 0 \} \subset \Lambda(u_\delta)\subset \{f \le 0\}\times\{0\}. 
\]
\end{prop}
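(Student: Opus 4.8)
The plan is to reduce Proposition~\ref{prop:polysubsets} to the approximation result of Proposition~\ref{prop:polysets}. Given $f\in\mathcal P_n$ with $\{f<0\}$ compact (or bounded below when $f$ is nonconstant), write $f = f_+ - f_-$ in a form adapted to the problem: the key observation is that for $\delta>0$ the shifted polynomial $f+\delta$ has $\{f+\delta\le 0\} = \{f\le -\delta\}$, which is a \emph{compact} set strictly contained in the open set $\{f<0\}$. So what I really want is a solution whose contact set is squeezed between $\{f\le-\delta\}$ and $\{f\le 0\}$. The difficulty compared with Proposition~\ref{prop:polysets} is that there the approximating polynomial $f$ was assumed $f\ge0$, so its zero set was exactly $\{f=0\}$, a \emph{thin} set, whereas now $\{f\le 0\}$ is a genuine full-dimensional region; I cannot directly feed $f$ into that proposition.

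The natural fix is to apply Proposition~\ref{prop:polysets}, or rather its proof, to an auxiliary \emph{nonnegative} polynomial that vanishes precisely on the region I want to be inside the contact set. First I would fix $\delta>0$ and consider the compact set $\{f\le-\delta\}$. Since $f+\delta/2 < 0$ on this set and $f+\delta/2$ is a polynomial, I want to build a solution forced to touch its obstacle on all of $\{f\le-\delta\}$ but nowhere outside $\{f\le 0\}$. Following the barrier construction in the proof of Proposition~\ref{prop:polysets}: take $p(x) = c\,p_k(x/\rho) + g_{\rm ext}(x)$ where $p_k$ is the harmonic extension of $|x'|^k-1$ (rescaled so that $B_1$ sits inside its contact set, as in Lemma~\ref{lem:globk}), $g$ is a cleverly chosen polynomial with $g\ge 0$, $g\equiv 0$ on $\{f\le -\delta\}$ would be too much to ask of a polynomial—so instead I should take $g$ to be something like $(-f-\tfrac{\delta}{2})_+$ smoothed, which is not polynomial. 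The cleaner route: choose the obstacle $-p$ with $p(x) := N(f(x)+\tfrac{3\delta}{4})$ directly, restricted/confined by adding a large multiple of $|x'|^k-1$ to keep the contact set bounded. Then $v_p$, the solution to \eqref{eq:TOP_-p}, has contact set contained in $\{p\le 0,\ x_{n+1}=0\} = \{f\le -3\delta/4\}$ intersected with the confining ball (by the least-supersolution characterization and Lemma~\ref{lem:comp}, since $v_p\ge 0$ forces the contact set into $\{-p\ge 0\}$), and one checks that on the smaller set $\{f\le-\delta\}$, where $-p = -N(f+3\delta/4)\ge N\delta/4$ is bounded below by a large positive constant, the solution must touch the obstacle—this is exactly the localized-barrier argument in Step~3 of the proof of Theorem~\ref{thm:main1} (the cylinder comparison against a paraboloid-type supersolution), which shows that wherever the obstacle is large and positive the contact happens. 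The confining term $|x'|^k$ with $k$ large, rescaled as in Lemma~\ref{lem:globk}, guarantees $\{p\le 0\}$ stays in a fixed ball, hence $\Lambda(u_\delta)\subset \{f\le 0\}\times\{0\}$ provided we also arrange $p\ge 0$ outside $\{f\le 0\}$, which holds after choosing the confining radius small enough and $N$ appropriately, exactly as in Proposition~\ref{prop:polysets}.

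So the steps, in order: (1) fix $\delta$, set $\delta' := 3\delta/4$ and consider the polynomial $f+\delta'$, whose negativity set is the compact $\{f\le-\delta'\}\supset\{f\le -\delta\}$ and is contained in $\{f<0\}$; (2) following Lemma~\ref{lem:globk} and the proof of Proposition~\ref{prop:polysets}, form $p(x) = N\bigl(f_{\rm ext}(x)+\delta'\bigr) + M\bigl(p_{k,\rho}(x)\bigr)$ where $f_{\rm ext}$ is the harmonic extension of $f$, $p_{k,\rho}$ is the harmonic extension of $|x'/\rho|^k - 1$ with $k$ even, and $N,M,\rho$ chosen so that (a) $p\ge 0$ on $\{x_{n+1}=0\}\setminus(\{f\le 0\}\cap B_{1/2})$, say, and (b) $-p\ge c_0>0$ on $\{f\le-\delta\}$; (3) let $u_\delta = v_p + p$ with $v_p$ the solution to \eqref{eq:TOP_-p}; then $\Lambda(u_\delta)=\Lambda(v_p)\subset\{-p\ge0\}\cap\{x_{n+1}=0\}\subset\{f\le 0\}\times\{0\}$ by the least-supersolution property and Lemma~\ref{lem:comp}; (4) run the cylinder-barrier argument from Step~3 of the proof of Theorem~\ref{thm:main1} at each point of $\{f\le-\delta\}$, where $-p$ is bounded below by the positive constant $c_0$, to conclude $\{f\le-\delta\}\times\{0\}\subset\Lambda(u_\delta)$; (5) note $u_\delta$ has polynomial growth since $p$ is a polynomial and $v_p\to 0$ at infinity by Theorem~\ref{thm:ERW}. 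The main obstacle I expect is step (2)/(4): balancing the constants so that $p$ is simultaneously $\ge 0$ outside $\{f\le 0\}$ (to get the outer inclusion) and sufficiently negative on $\{f\le -\delta\}$ (to get the inner inclusion via the localized barrier), while the confining term keeps everything bounded without polluting either estimate—this is essentially a quantitative bookkeeping of the same type already carried out in Lemma~\ref{lem:globk} and the proof of Proposition~\ref{prop:polysets}, but one must be careful that the paraboloid supersolution in the cylinder argument has enough room, i.e. that $c_0$ can be taken uniform on the compact set $\{f\le-\delta\}$, which it can since that set is compact and $-p$ is continuous and strictly positive there.
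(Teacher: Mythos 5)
Your outer inclusion, the polynomial growth, and the reduction to a solution $u_\delta=v_p+p$ with $v_p$ solving \eqref{eq:TOP_-p} are all fine, but step (4) — the claim that the cylinder-barrier argument of Step~3 of Theorem~\ref{thm:main1} forces contact at every thin point where the obstacle $-p$ is bounded below by a positive constant $c_0$ — is a genuine gap, and it is exactly the heart of the proposition. First, note that multiplying the obstacle by $N$ multiplies the solution of \eqref{eq:TOP_-p} by $N$ and leaves the contact set unchanged, so choosing $N$ ``large'' buys nothing: the contact set of your construction is essentially that of the obstacle $-(f+\tfrac{3\delta}{4})$. Second, ``obstacle $\ge c_0>0$ at a point'' does not imply contact there: the solution can bridge over a dip of the obstacle. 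Concretely, take $f(x')=A(|x'|^2-1)^2-A-2\delta$ with $A\gg\delta$; then $f(0)=-2\delta$, so the origin lies in $\{f\le-\delta\}$ and your $-p$ is $\approx \tfrac{5\delta}{4}N$ there, while on $\{|x'|=1\}$ the obstacle is of size $\approx AN$. Comparing $v_p$ from below (via Lemma~\ref{lem:comp} in a ball, against the solution with the smaller obstacle $AN\bigl(1-4(|x'|^2-1)^2\bigr)_+$ and zero boundary data) gives $v_p(0)\ge c_1 A N$ for a dimensional $c_1>0$, which for large $A$ exceeds the obstacle value $\tfrac{5\delta}{4}N$ at the origin; hence the origin is \emph{not} in the contact set and the inner inclusion $\{f\le-\delta\}\times\{0\}\subset\Lambda(u_\delta)$ fails. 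The barrier in Step~3 of Theorem~\ref{thm:main1} cannot rescue this: there the solution was already known to lie within $\kappa\Phi\le\tfrac{1}{2n}$ of the obstacle near the point in question (the two-sided bound $-|x_{n+1}|q\le u\le-|x_{n+1}|q+\kappa\Phi$), and no analogous a priori closeness of $v_p$ to $-p$ on $\{f\le-\delta\}$ is available when $f$ oscillates.

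The missing idea, which is how the paper proceeds, is to \emph{flatten} the obstacle before solving: one composes $f$ with the one-variable polynomials $p_{2k}(t)=1-\bigl(\tfrac{t+1}{1-\delta}\bigr)^{2k}$ (after normalizing $f\ge-1$, $\{f\le0\}\subset B_1'$), so that the thin obstacle $p_{2k}(f)$ converges in $C^2_{\rm loc}$ to the constant $1$ on $\{f<-\delta\}$ and is $\le 0$ on $\{f\ge-\delta\}$; this removes precisely the dips that defeat your argument. One then lets $k\to\infty$: the solutions $w_k$ (uniformly bounded, decaying like $|x|^{1-n}$) converge in $C^1_{\rm loc}$, the limit equals $1$ on $\{f<-\delta\}\times\{0\}$ with strictly negative normal derivative there by Hopf's lemma, and the uniform $C^1$ convergence transfers the strict sign of $\partial_{n+1}w_k$ to a fixed large $k$ on the compact set $\{f\le-2\delta\}$, which forces contact there; renaming $2\delta$ as $\delta$ and adding back the harmonic extension of the obstacle gives $u_\delta$. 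So the conclusion is reached not by a pointwise barrier at a positive obstacle value, but by a compactness-plus-Hopf argument for a sequence of obstacles approximating a characteristic function; without that (or some substitute controlling the oscillation of the obstacle on $\{f\le-\delta\}$), your construction does not yield the inner inclusion.
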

\begin{proof}
Since $\{ f < 0\}$ is compact we have that $f$ is bounded below (they are equivalent, for $f$ nonconstant), and up to multiplying by a constant and rescaling we can assume 
\[
f \ge -1 \quad\text{in}\quad \R^n \quad\text{and}\quad \{f \le 0\}\subset B'_{1}.
\]
For each $k\in \N$, define the (single-variable) polynomial 
\[
p_{2k}(t) := 1-\left(\frac{t+1}{1-\delta}\right)^{2k}\in \mathcal{P}_1. 
\]
We know that 
\[
\frac{f+1}{1-\delta}\ge 0\quad\text{in}\quad \R^n,
\]
and 
\[
\frac{f+1}{1-\delta}\ge 1\quad\Leftrightarrow \quad f \ge -\delta. 
\]
In particular, 
\[
p_{2k}(f) \le 1 \quad\text{in}\quad \R^n, 
\]
and
\[
\begin{split}
  p_{2k}(f(x)) &\uparrow  1\quad\text{for}\quad x\in \{f < -\delta\},\quad\text{as}\quad k \to \infty,\\
p_{2k}(f(x)) &\le 0\quad\text{for}\quad x\in \{f \ge -\delta\}.
\end{split}
\]

That is, $p_{2k}(f)$ is a polynomial with compact positive set in $\R^n$, such that 
\begin{equation}
\label{eq:boundbelow}
\max\{p_{2k}(f(x)), 0\}\uparrow \chi_{\{f < -\delta\}}(x),\quad\text{for}\quad x\in \R^n,
\end{equation}
 where $\chi_A$ denotes the characteristic function of a set $A$. 
 
 We let $w_k$ be the monotonically nondecreasing sequence of  global solutions to the thin obstacle problem \eqref{eq:TOP_-p} with thin obstacle given by the polynomial  $-p = p_{2k}(f)$. Then, each $w_k$ satisfies 
 \begin{equation}\label{eq:w_bound0}
 0 \le w_k\le 1 \quad\text{in}\quad \R^{n+1}\quad\text{for all}\quad k \in \N,
\end{equation}
 and 
 \[
 \Lambda(w_k) \subset \{p_{2k}(f) > 0\}\times \{0\} \subset \{f < -\delta\}\times\{0\} \subset \{f \le 0\}\times\{0\}. 
 \]
 They also satisfy, by comparing them with the fundamental solution (thanks to Lemma~\ref{lem:comp}, since $p_{2k}(f) \le 1$ in $\R^n$ and $p_{2k}(f) \le 0$ in $\R^n\setminus B_1'$), 
 \begin{equation}\label{eq:w_bound}
 w_k(x) \le |x|^{1-n}\quad\text{for}\quad x\in \R^{n+1},
 \end{equation}
so that   they all vanish uniformly to zero at infinity.

 Notice that $p_{2k}(f)$ is uniformly $C^{1,1}_{\rm loc}$ in $\{f < -\delta\}$. More precisely,  we have 
 \[
 \lim_{k \to \infty} \|p_{2k}(f) -1\|_{C^{2}(K')} =  \lim_{k \to \infty} \left\|\left(\frac{f+1}{1-\delta}\right)^{2k}\right\|_{C^{2}(K')} =  0
 \]
 for any $K'\Subset \{f < -\delta\} = \left\{\frac{f+1}{1-\delta} < 1\right\}$. In particular, by classical $C^{1,\alpha}$ estimates for the thin obstacle problem, \cite{Caf79}, (together with local harmonic estimates), $w_k$ converges in $C^{1}_{\rm loc}$ in $\left(\{f < -\delta\}\times\{0\}\right)\cup\{x_{n+1} \neq 0\}$ to some $w_\infty$ (up to subsequences), which also satisfies \eqref{eq:w_bound0}-\eqref{eq:w_bound}, and is harmonic in $\{x_{n+1}\neq 0\}$. Also, from \eqref{eq:boundbelow} we have $w_\infty \equiv 1$ in $\{f< -\delta\}\times \{0\}$ (and $w_\infty \not\equiv 1$ by \eqref{eq:w_bound}), and by Hopf's lemma $\partial_{n+1} w_\infty < 0$ in $\{f < -\delta\}\times\{0\}$. 
 
Since $f$ is a polynomial and $\{f \le 0\}$ is compact, we know 
 \[
 \{ f < -2\delta \}\Subset \{f < -\delta\},
 \]
 and so, the previous Hopf's lemma actually implies that there exists $c_\delta > 0$ with 
 \[
 \partial_{n+1} w_\infty < -c_\delta < 0\quad\text{in}\quad \{f < -2\delta\}.
 \]
From the local uniform convergence in $C^{1}_{\rm loc}\left(\{f < -\delta\}\times\{0\}\right)$, we obtain that for all $k$ large enough, 
  \[
 \partial_{n+1} w_k< -c_\delta/2 < 0\quad\text{in}\quad \{f < -2\delta\}.
 \]
 That is, $\{f < -2\delta\}\times\{0\}\subset \Lambda(w_k)$ if $k$ is large enough. By renaming $2\delta$ into $\delta$, we get the desired result with $u_\delta = w_k+\tilde p$ and a sufficiently large $k$, where $\tilde p$ denotes the harmonic extension of $-p_{2k}(f)$ from $\{x_{n+1} = 0\}$ towards $\{x_{n+1}\neq 0\}$. 
\end{proof}

As a consequence of the previous results, we obtain the proof of Theorem~\ref{thm:main2}:

\begin{proof}[Proof of Theorem~\ref{thm:main2}]
Up to a rescaling, we assume $K \subset B_{1/4}$, and denote $K = K'\times\{0\}$. Let $d(x') = {\rm dist}(x', K'):\R^n \to \R$, which is a 1-Lipschitz continuous function. Let $\bar\eta$  be given by Lemma~\ref{lem:abs}, which depends only on $\eps > 0$ and $K'$ (through the continuous function $g(x') = d(x')$ in $\R^n$). 

Let $p\in \mathcal{P}_n$ be a polynomial such that 
\[
|p(x') - d(x')|\le \bar\eta/4,\quad\text{for}\quad x'\in B'_1. 
\]
Let 
\[
\mathcal{P}_n\ni \bar f(x') := p(x')+\left(\frac{3}{2}|x'|\right)^{2k},\quad\text{with}\quad k\in \N. 
\]
Then, for $k$ large enough, $\bar f > 0$ in $\R^n\setminus B'_1$ and 
\[
|\bar f(x') - d(x')|\le \bar\eta/3,\quad\text{for}\quad x'\in B_{1/2}. 
\]
Define, finally, 
\[
f(x') := \bar f(x')-2\bar \eta/3. 
\]
Then, we have 
\[
K' \subset \{d\le \bar\eta/3\}\subset \{f \le 0\}\subset \{d \le \bar\eta\}. 
\]
That is, we have by Lemma~\ref{lem:abs}
\[
\dH(\{f \le 0\}, K') \le \dH(\{d\le 0\}, \{d\le \bar\eta\}) \le \eps,
\]
and, taking $ A= \{f\le 0\}$ in Lemma~\ref{lem:abs}, 
\[
\dH(\partial K', \partial \{f\le 0\})\le \eps. 
\]

Finally, for any $\delta > 0$ we can find $u_\delta$ a global solution with polynomial growth, given by Proposition~\ref{prop:polysubsets} with $f-2\delta$, such that  
\[
\{f \le \delta\}\times\{0\}\subset \Lambda(u_\delta) \subset \{f \le 2\delta \}\times\{0\}. 
\]
We choose $\delta$ small enough, given by Lemma~\ref{lem:abs} with $g = f$, so that 
\[
\dH(\{f \le 0\}\times\{0\}, \Lambda(u_\delta) )\le \dH(\{f \le 0\},\{f \le 2\delta\} )\le \eps, 
\]
and 
\[
\dH(\partial\{f \le 0\}\times\{0\}, \Gamma(u_\delta))\le \eps. 
\]
Thus, we have $K\subset \{f\le 0\}\times\{0\}\subset \Lambda(u_\delta)$, and by the triangle inequality, 
\[
\dH(\Lambda(u_\delta), K) \le \dH(\{f\le 0\}, K') +\dH(\{f \le0\}\times\{0\}, \Lambda(u_\delta)) \le 2\eps,
\]
and
\[
\dH(\Gamma(u_\delta), \partial_{\R^n} K) \le \dH(\partial \{f\le 0\}, \partial K') +\dH(\partial \{f \le0\}\times\{0\}, \Gamma(u_\delta)) \le 2\eps,
\]

Up to renaming $2\eps$ into $\eps$, we get the desired result. 
\end{proof}


\end{document}